\theoremstyle{plain}
\newtheorem{theorem}{Theorem}[section]
\newtheorem{proposition}[theorem]{Proposition}
\newtheorem{lemma}[theorem]{Lemma}
\numberwithin{theorem}{section}
\numberwithin{equation}{section}
\newcommand{\average}{{\mathchoice {\kern1ex\vcenter{\hrule height.4pt
width 6pt depth0pt} \kern-9.7pt} {\kern1ex\vcenter{\hrule
height.4pt width 4.3pt depth0pt} \kern-7pt} {} {} }}
\def\R{\mathbb{R}}
\def\div{\text{div}}
\renewcommand{\b }{\beta }
\renewcommand{\d}{\delta }
\newcommand{\D }{\Delta }
\newcommand{\e }{\varepsilon }
\newcommand{\g }{\gamma}
\newcommand{\G }{\Gamma}
\renewcommand{\l }{\lambda }
\newcommand{\n }{\nabla }
\newcommand{\vp }{\varphi }
\renewcommand{\phi}{\varphi}
\newcommand{\rh }{\rho }
\newcommand{\s }{\sigma }
\renewcommand{\t }{\tau }
\renewcommand{\th }{\theta }
\renewcommand{\O }{\Omega }
\newcommand{\ov}{\overline}
\newcommand{\be}{\begin{equation}}
\newcommand{\ee}{\end{equation}}
\newcommand{\de}{\partial}
\newcommand{\ti}{\widetilde}
\renewcommand{\k}{\kappa}
\newcommand{\calC }{\mathcal{C}}
\newcommand{\calD }{\mathcal{D}}
\renewcommand{\H}{{\mathcal H}}
\newcommand{\cD}{{\mathcal D}}
\newcommand{\B}{{Q}}
\renewcommand{\epsilon}{\varepsilon}
\begin{document}
 
\date{\today}
\title[Influence of the curvature in the existence of solutions for a two Hardy-Sobolev critical exponents]
{ Influence of the curvature in the existence of solutions for a two Hardy-Sobolev critical exponents}
\author{El Hadji Abdoulaye THIAM}
\address{H. E. A. T. : Université Iba Der Thiam de Thies, UFR des Sciences et Techniques, département de mathématiques, Thies.}
\email{elhadjiabdoulaye.thiam@univ-thies.sn}
\author{Abdourahmane Diatta}
\address{A.D. : Université Assane Seck de Ziguinchor, UFR des Sciences et Technologies, département de mathématiques, Ziguinchor.}
\email{a.diatta20160578@zig.univ.sn}

\begin{abstract}
For $N\geq 4$, we let $\Omega$ be a bounded domain  of $\mathbb{R}^N$ and $\Gamma$ be  a closed curve contained in $\Omega$.  We study existence of positive solutions $u \in H^1_0\left(\Omega\right)$ to the  equation
\begin{equation}\label{Atusi}
-\Delta u+hu=\l\rho^{-s_1}_\Gamma u^{2^*_{s_1}-1}+\rho^{-s_2}_\Gamma u^{2^*_{s_2}-1} \qquad \textrm{ in } \Omega
\end{equation}
where $h$ is a continuous function and $\rho_\Gamma$ is the distance function to $\Gamma$.  We prove the existence of a mountain pass solution for this Euler-Lagrange equation depending on the local geometry of the curve and the potential $h$.
\end{abstract}
\maketitle
\textbf{Key Words}: Two Hardy-Sobolev exponents; Curvature; Positive mountain Pass solution; Curve singularity.
\section{Introduction}
For $N \geq 3$, the famous Caffarelli-Kohn-Nirenberg inequality asserts that: there exists a positive constant $C=C_{N, a, b}$ only depending on $N, a, b$, such that
\begin{equation}\label{CKNI}
C \left(\int_{\R^N} |x|^{-bq} |u|^q dx\right)^{2/q} \leq \int_{\R^N} |x|^{-2a} |\n u|^2 dx \qquad \forall u \in \calC^\infty_c(\R^N),
\end{equation}
where $N \geq 3$,  
$
-\infty < a< \frac{N-2}{2}, 0\leq b-a\leq 1
$
and 
$
q=\frac{2N}{N-2+2(b-a)},
$
see for instance \cite{CKN}. Note that the case $b=a+1$ and $p=2$, \eqref{CKNI} corresponds to the following Hardy inequality:
\begin{equation}\label{Hardy}
\displaystyle
\left(\frac{N-2}{2}\right)^2 \int_{\R^N} |x|^{-2} |u|^2 dx \leq \int_{\R^N} |\nabla u|^2 dx \qquad\forall u \in \mathcal{D}^{1,2}({\R^N}),
\end{equation}
where $\mathcal{D}^{1,2}({\R^N})$ denotes the completion of $\calC^\infty_c(\R^N)$ with respect to the norm 
$$
u \longmapsto \sqrt{\int_{\R^N} |\n u|^2 dx}.
$$
The constant $\left(\displaystyle\frac{N-2}{2}\right)^2 $ is sharp and never achieved in $\mathcal{D}^{1,2}({\R^N})$.
The case $a=b=0$ and $p=\frac{2N}{N-2}$corresponds to the famous Sobolev inequality:
\begin{equation}\label{Sobolev}
S_{N,0} \left(\int_{\R^N} |u|^{2^*} dx\right)^{2/2^*} \leq \int_{\R^N} |\nabla u|^2 dx \qquad \forall u \in  \mathcal{D}^{1,2}({\R^N}),
\end{equation}
where the best constant $$S_{N,0}= \displaystyle\frac{N(N-2)}{4}\omega_N^{2/N}$$ is achieved in $\mathcal{D}^{1,2}({\R^N})$. Here $\omega_N=|S^{N-1}|$  is the volume of the N-sphere and $ \displaystyle 2^*:=2^*(0)=\frac{2N}{N-2} $ is the critical Sobolev exponent.
By H\"{o}lder's inequality, we get the interpolation between the Hardy and the Sobolev inequalities, called Hardy-Sobolev inequality given by
\begin{equation}\label{Hardy-Sobolev}
S_{N,s} \left(\int_{\R^N} |x|^{-s} |u|^{2^*(s)} dx\right)^{2/2^*(s)} \leq \int_{\R^N} |\nabla u|^2 dx \qquad \forall u \in \mathcal{D}^{1,2}({\R^N}),
\end{equation}
where  for $s \in [0,2]$, we have $\displaystyle 2^*(s)= \frac{2(N-s)}{N-2}$ is the critical Hardy-Sobolev exponent. We refer to \cite{GK} for more details about Hardy-Sobolev inequality. The value of the best constant is
$$S_{N,s}:= (N-2)(N-s) \left[\frac{w_{N-1}}{2-s} \frac{\Gamma^2(N-\frac{s}{2-s})}{\Gamma(\frac{2(N-s)}{2-s})}\right]^{\frac{2-s}{N-s}},$$
where $\Gamma$ is the Gamma Euler function. It was computed by Lieb \cite{Lieb} when $s \in (0,2)$. The ground state solution is given, up to dilation, by
$$
w(x)=C_{N,s}(1+|x|^{2-s})^{\frac{2-N}{2-s}},
$$
for some positive known constant $C_{N,s}$.

The Caffarelli-Kohn-Nirenberg's inequality on domains and related problems have been studied these last years. For instance, we let $\Omega$ be a domain of $\R^N$ and consider the equation
\begin{equation}\label{ELE12}
\begin{cases}
-\div(|x|^{-2a} \n u)=|x|^{-bq} u^{q-1}, \quad u>0 \qquad &\textrm{ in $\Omega$}\\\
 u=0 &\textrm{ on $\de \O$}.
\end{cases}
\end{equation}
To study \eqref{ELE12}, one could let $w(x)=|x|^{-a} u(x)$. Direct computations show that
$$
\int_\O |x|^{-2a} |\n u|^2 dx =\int_{\O} |\n w|^2 dx-a(n-2-a) \int_{\O} |x|^{-2} w^2 dx.
$$
Then solutions of \eqref{ELE12} can be obtained by minimizing the following quotient
\begin{equation}\label{Inf-Min}
S^N_{a,b}(\O):= \inf_{u \in \calD^{1,2}_a(\O) \setminus \lbrace 0 \rbrace} \frac{\displaystyle\int_{\O} |\n w|^2 dx-a(n-2-a) \int_{\O} |x|^{-2} w^2 dx}{\left(\displaystyle\int_{\O} |x|^{-bq} |u|^q dx\right)^{2/q}},
\end{equation}
where $\calD^{1,2}_a(\O)$ be the completion of $\calC^\infty_c(\O)$ with respect to the norm
$$
u \longmapsto \sqrt{\int_\O |x|^{-2a} |\n u|^2 dx}.
$$
The question related to the attainability of the best constant $S^N_{a,b}(\O)$ in \eqref{Inf-Min} is studied by many authors. For more developments related to that, we refer the readers to \cite{BPZ, CH, CW,  CL, CC, DET, GK, GR, Li, Lieb, Lin, LW} and references therein.\\

When $0\in \de \O$, the existence of minimizers for $S^N_{a,b}(\O)$ was first studied by Ghoussoub-Kang \cite{GK} and Ghoussoub-Robert\cite{GR}. Later Chern and Lin \cite{CL} proved the existence of minimizer provided the mean curvature of the boundary at the origin is negative and ($a<b<a+1$ and $N\geq 3$) or ($b=a>0$ and $N\geq 4$).  The case $a=0$ and $0<b<1$ was first studied by \cite{GR} before the generalization in \cite{CL}. More generally questions related to Partial Differential Equations involving multiples Hardy-Sobolev critical exponents have been investigated these last decades. In particular, we let $\Omega$ be a domain of $\R^N$ such that $0\in \de \O$ and consider the equation
\begin{equation}\label{A1A100}
\begin{cases}
\displaystyle-\Delta u=\l\frac{u^{2^*_{s_1}-1}(x)}{|x|^{s_1}}+\frac{u^{2^*_{s_2}-1}}{|x|^{s_2}} \qquad & \textrm{ in $\O$}\\\
u(x)>0 & \textrm{ in $\O$},
\end{cases}
\end{equation}
where $0\leq s_2 <s_1<2$, $\l \in \R$  and for $i=1, 2$,  $2^*_{s_1}:=\frac{2(N-s_i)}{N-2}$ it the critical Hard-Sobolev exponent. When $s_2=0$ and $\l<0$, then equation \eqref{A1A100} has no nontrivial solution. For $\l>0$, $0<s_1<2$ and $s_2=0$, then using variational methods, Hsia Lin and Wadade \cite{HLW} proved existence of solutions provided  $N \geq 4$ and the mean curvature at the origin is negative. For the case $N=3$, $\l \in \R$ and $0<s_2<s_1<2$, the equation \eqref{A1A100} has a least-energy solution provided the mean curvature at the origin is negative, see \cite{LLin}.

Concerning the existence and non-existence of solution related to equation \eqref{A1A100} in the half-space $\O=\R^N_+$, we refer to Bartsch-Peng and Zhang \cite{BPZ} for the case $0<s_2<s_1=2$ and 
$
\l<\left(\frac{N-2}{2}\right)^2
$;
to Musina \cite{Musina} when $N\geq 4$, $s_2=0$, $s_1=2$ and $0<\l <\left(\frac{N-2}{2}\right)^2$ and to Hsia, Lin and Wadade \cite{HLW} when $s_2=0$, $0<s_1<2$ and $\l >0$.\\
%
%
%
%
%
%
%
%
%
%
%
%
%
%
%
%
%
%
%
%
%
%
%
%
%

In this paper, we are concerned with the effect of the local geometry of the singularity $\Gamma$ in the existence of solutions of the following non-linear partial differential equation involving two Hardy-Sobolev critical exponents. More precisely, letting $h$ be a continuous function and $\l$ be a real parameter, we consider
\begin{equation}\label{Euler-Lagrange11}
\begin{cases}
\displaystyle-\Delta u(x)+ h u(x)=\l \frac{u^{2^*_{s_1}-1}(x)}{\rho_\G^{s_1}(x)}+\frac{u^{2^*_{s_2}-1}(x)}{\rho_\G^{s_2}(x)} \qquad & \textrm{ in $\O$}\\\\
u(x)>0 \qquad \textrm{ and } \qquad u(x)=0 &\textrm{ on $\de \Omega$},
\end{cases}
\end{equation}
where $\rho_\G(x):=\inf_{y \in \Gamma}|y-x|$ is the distance function to the curve $\Gamma$,  $0< s_2 <s_1<2$, $2^*_{s_1}:=\frac{2(N-s_1)}{N-2}$ and $2^*_{s_2}:=\frac{2(N-s_2)}{N-2}$ are two critical Hardy-Sobolev exponents.
To study the equation  \eqref{Euler-Lagrange11}, we consider the following non-linear functional $\Psi: H^1_0(\O) \to \R$ defined by:
\begin{equation}\label{Functional}
\Psi(u):=\frac{1}{2} \int_\O |\n u|^2 dx+\frac{1}{2} \int_\O h(x) u^2 dx- \frac{\l}{2^*_{s_1}} \int_\O \frac{|u|^{2^*_{s_1}}}{\rho_\G^{s_1}(x)} dx-\frac{1}{2^*_{s_2}} \int_\O \frac{|u|^{2^*_{s_2}}}{\rho_\G^{s_2}(x)} dx.
\end{equation}
It is easy to verify that there exists a positive constant $r>0$ and $u_0 \in H^1_0(\Omega)$ such that $\|u_0\|_{H^1_0(\O)}>r$ and
$$
\inf_{\|u\|_{H^1_0(\O)}=r} \Psi(u) >\Psi(0) \geq \Phi(u_0), 
$$
see for instance Lemma \ref{c1A} below.
Then the point $(0, \Psi(0))$ is separated from the point $(u_0, \Psi(u_0))$ by a ring of mountains. Set
\begin{equation}\label{Heat}
c^*:=\inf_{P \in \mathcal{P}} \max_{v \in P} \Psi(v),
\end{equation}
where $\mathcal{P}$ is the class of continuous paths in $H^1_0(\O)$ connecting $0$ to $u_0$. Since $2^*_{s_2}>2^*_{s_1}$, the function $t \longmapsto \Psi(tv)$ has the unique maximum for $t \geq 0$. Furthermore, we have
$$
c^*:=\inf_{u \in H^1_0(\O), u \geq 0, u \neq 0} \max_{t \geq 0} \Psi(tu).
$$
Due to the fact that the embedding of $H^1_0(\O)$ into the weighted Lebesgue spaces $L^{2^*_{si}}(\rho_\Gamma^{-si} dx)$ is not compact, the functional $\Psi$  does not satisfy the Palais-Smale condition. Therefore, in general $c^*$ might not be a critical value for $\Psi$.

To recover compactness, we study the following non-linear problem: let $x=(y, z) \in \R\times \R^{N-1}$ and consider 
\begin{equation}\label{A1A1}
\begin{cases}
\displaystyle-\Delta u=\l\frac{u^{2^*_{s_1}-1}(x)}{|z|^{s_1}}+\frac{u^{2^*_{s_2}-1}}{|z|^{s_2}} \qquad & \textrm{ in $\R^N$}\\\
u(x)>0 & \textrm{ in $\R^N$}.
\end{cases}
\end{equation}
To obtain solutions of \eqref{A1A1}, we consider the functional $\Phi: \calD^{1,2}(\R^N)$ defined by
$$
\Phi(u):=\frac{1}{2} \int_{\R^N} |\n u|^2 dx- \frac{\l}{2^*_{s_1}} \int_{\R^N} |z|^{-s_1}|u|^{2^*_{s_1}} dx-\frac{1}{2^*_{s_2}} \int_{\R^N} |z|^{-s_2}|u|^{2^*_{s_2}} dx.
$$
Next, we define
$$
\beta^*:=\inf_{u \in D^{1, 2}(\R^N), u \geq 0, u \neq 0} \max_{t \geq 0} \Phi(tu).
$$
Then we get compactness provided
$$
c^*<\beta^*,
$$
see Proposition \ref{Prop-Compactness} below. So it is important to study existence, symmetry and decay estimates of non-trivial solution $w\in \calD^{1,2}(\R^N)$ of \eqref{A1A1}. Then we have the following results.
\begin{theorem}\label{TheoremA}
Let $N \geq 3$, $0 \leq s_2<s_1<2$, $\l \in \R$. Then equation
\begin{equation}\label{Euler-Lagrange1}
\begin{cases}
\displaystyle-\Delta u=\l\frac{u^{2^*_{s_1}-1}(x)}{|z|^{s_1}}+\frac{u^{2^*_{s_2}-1}}{|z|^{s_2}} \qquad & \textrm{ in $\R^N$}\\\
u(x)>0 & \textrm{ in $\R^N$}
\end{cases}
\end{equation}
has a positive ground state solution $w \in \calD^{1,2}(\R^N)$. Moreover $w$ depend only on $y$ and $|z|$. In other words, there exists a function $\theta: \R\times \R_+ \to \R_+$ such that
$$
w(x)=\theta(y, |z|).
$$
\end{theorem}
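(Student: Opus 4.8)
The plan is to realize $w$ as a least-energy critical point of $\Phi$, by identifying $\beta^*$ with the infimum of $\Phi$ over its Nehari manifold
$$\mathcal{N}:=\big\{u\in\calD^{1,2}(\R^N)\setminus\{0\}:\ \langle\Phi'(u),u\rangle=0\big\}.$$
Since $2^*_{s_2}>2^*_{s_1}>2$, the fibering map $t\mapsto\Phi(tu)$ has a unique maximum for each $u\neq0$, attained at the unique $t(u)>0$ with $t(u)u\in\mathcal N$, so $\beta^*=\inf_{\mathcal N}\Phi$. Working with the combination $\Phi(u)-\tfrac{1}{2^*_{s_1}}\langle\Phi'(u),u\rangle$, in which the $\l$-term cancels and the two remaining coefficients $\tfrac12-\tfrac1{2^*_{s_1}}$ and $\tfrac1{2^*_{s_1}}-\tfrac1{2^*_{s_2}}$ are both positive, yields at once that $\beta^*>0$ and that any Palais--Smale sequence at level $\beta^*$ is bounded in $\calD^{1,2}(\R^N)$, for every $\l\in\R$. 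First I would use the mountain pass theorem to produce $(u_n)$ with $\Phi(u_n)\to\beta^*$ and $\Phi'(u_n)\to0$, and replace it by a nonnegative one.

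The symmetry reduction is the device that restores enough compactness. Schwarz symmetrization of each slice $u_n(y,\cdot)$ in the $z$-variable produces $u_n^\ast(x)=\theta_n(y,|z|)$, radially decreasing in $z$, with $\|\n u_n^\ast\|_2\le\|\n u_n\|_2$ by the Pol\'ya--Szeg\H{o} inequality for symmetrization in the $z$-variables, while $\int|z|^{-s_i}|u_n^\ast|^{2^*_{s_i}}\ge\int|z|^{-s_i}|u_n|^{2^*_{s_i}}$ by Hardy--Littlewood, since $|z|^{-s_i}$ is itself radially decreasing in $z$. For $\l\ge0$ this gives $\max_t\Phi(tu_n^\ast)\le\max_t\Phi(tu_n)$, so the minimizing sequence may be taken cylindrically symmetric; for the sign obstruction when $\l<0$ I would instead recover the cylindrical symmetry of the final solution a posteriori by the moving plane method in the $z$-variables, using positivity and the $\calD^{1,2}$-decay.

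The heart of the argument is a concentration--compactness step. Both $\Phi$ and the class of functions of $(y,|z|)$ are invariant under the two genuinely non-compact one-parameter actions, the $y$-translations $u\mapsto u(\cdot-(\tau,0))$ and the critical dilations $u\mapsto\mu^{(N-2)/2}u(\mu\,\cdot)$; crucially, restricting to functions of $(y,|z|)$ pins any concentration to the axis $\{z=0\}$, reducing the translation degeneracy from $\R^N$ to the single line $\R_y$. Using the concentration function of the measures $|z|^{-s_2}|u_n|^{2^*_{s_2}}\,dx$, I would choose scales $\mu_n>0$ and centers $\tau_n\in\R$ so that $v_n(x):=\mu_n^{(N-2)/2}u_n(\mu_n x+(\tau_n,0))$ captures a fixed fraction of the mass in the unit ball. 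As these transformations are isometries of $\calD^{1,2}(\R^N)$ preserving $\Phi$ and $\Phi'$, the sequence $(v_n)$ remains Palais--Smale at level $\beta^*$, and its weak limit $v$ is a critical point. The main obstacle is to prove $v\neq0$: vanishing is excluded because it would force $\int|z|^{-s_2}|v_n|^{2^*_{s_2}}\to0$ against $\beta^*>0$, and a dichotomy in which part of the mass escapes along $\R_y$ or to scale $0$ or $\infty$ is excluded because each surviving bubble would be a nontrivial critical point of energy at least $\beta^*$, forcing the total above $\beta^*$; the Brezis--Lieb lemma for the two weighted critical nonlinearities makes this energy accounting rigorous.

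Once $v\neq0$ is secured, $v\in\mathcal N$ forces $\Phi(v)\ge\beta^*$, while weak lower semicontinuity and the Brezis--Lieb splitting give $\Phi(v)\le\liminf\Phi(v_n)=\beta^*$; hence $\Phi(v)=\beta^*$, the convergence $v_n\to v$ is strong, and $w:=v$ is a ground state. Replacing $v$ by $|v|$ if needed and invoking interior elliptic regularity away from the singular line together with the strong maximum principle and a Harnack argument yields $w>0$, while the symmetrization (or the moving-plane step) gives $w(x)=\theta(y,|z|)$. I expect the concentration--compactness estimate---ruling out the splitting of mass along the $y$-axis and controlling the weighted nonlinear terms in the Brezis--Lieb step---to be the genuinely delicate point.
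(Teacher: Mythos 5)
You should first be aware that the paper does not actually prove this theorem: the existence part is dispatched in Section \ref{Section3} with ``applying similar arguments as in [\cite{LLin}, Theorem 1.2] \dots we omit the proof'', and the symmetry claim in Proposition \ref{Prop-Decay-Esti1} is reduced to the sentence that ``the proof of the symmetry is based on the moving plane method'' followed by a list of references. So your proposal can only be compared with the cited sources, not with an argument in the text. Your blueprint --- identify $\b^*$ with the Nehari infimum via uniqueness of the maximum of the fibering maps (which indeed holds for every $\l\in\R$ since $2<2^*_{s_1}<2^*_{s_2}$), extract a bounded Palais--Smale sequence, restore compactness modulo the invariance group ($y$-translations and dilations, under which both weighted critical terms are exactly invariant), and obtain the $(y,|z|)$ dependence by partial Schwarz symmetrization for $\l\ge 0$ and by moving planes in $z$ for $\l<0$ --- is the standard route for this class of problems and is essentially the strategy of \cite{LLin}; the individual ingredients you invoke (P\'olya--Szeg\"o and Hardy--Littlewood for symmetrization in the $z$-slices only, Brezis--Lieb for the weighted terms, energy additivity of bubbles) are correctly stated.

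Two places need repair. First, a minor one: passing from a Palais--Smale sequence $(u_n)$ to $(|u_n|)$ does not preserve $\Phi'(u_n)\to 0$; run the mountain pass instead for the functional with $u_+^{2^*_{s_i}}$ in the nonlinear terms, whose critical points are nonnegative upon testing with $u_-$. Second, and substantively: your exclusion of dichotomy (``each surviving bubble is a nontrivial critical point of energy at least $\b^*$'') is only valid for bubbles carried by the axis $\{z=0\}$. For $\l<0$ you cannot symmetrize, so you must also rule out bubbles centred at points with $|z_0|>0$ and mass escaping to $|z|\to\infty$. When $s_2>0$ this can be done: under the blow-up scaling the two weighted terms pick up factors $\mu^{s_1}$ and $\mu^{s_2}$ (because $2^*_{s_i}<2^*_0$), so any off-axis profile is a harmonic function in $\calD^{1,2}(\R^N)$, hence zero; this should be written out, as it is exactly the delicate point you flag. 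But the statement allows $s_2=0$, and there the off-axis limit problem is the unweighted critical Sobolev equation. For $\l<0$ and $s_2=0$ one checks that $\b^*$ equals the Aubin--Talenti level $\frac{1}{N}S_{N,0}^{N/2}$ and is not attained: any nontrivial $w$ would satisfy $\max_{t\ge0}\Phi(tw)>\max_{t\ge0}\Phi_0(tw)\ge \frac{1}{N}S_{N,0}^{N/2}$, where $\Phi_0$ drops the $\l$-term, while off-axis concentrating Sobolev bubbles drive $\max_{t\ge0}\Phi(tU_\e)$ down to $\frac{1}{N}S_{N,0}^{N/2}$. So in the corner case $s_2=0$, $\l<0$ the theorem as stated is false, and your dichotomy argument breaks exactly where it should; under the hypothesis $0<s_2<s_1<2$ actually used in Section \ref{Section3}, your plan closes once the off-axis analysis is supplied.
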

Next we have the following decay estimates of the solution $w$ and its higher order derivatives.
\begin{theorem}\label{TheoremB}
Let $w$ be a solution of the Euler-Lagrange equation  \eqref{Euler-Lagrange1}. Then
\begin{itemize}
\item[(i)] there exists two positive constants $c_1<c_2$ such that:
$$
\frac{c_1}{1+|x|^{N-2}}\leq u(x) \leq \frac{c_2}{1+|x|^{N-2}}, \qquad \forall x\in \R^N.
$$
\item[(ii)] For   $|x|= |(t,z)|\leq 1$  
$$ 
|\n w (x)|+ |x| |D^2 w (x)|\leq C_2 |z|^{1-s_1}
$$
\item[(iii)] For   $|x|= |(t,z)|\geq 1$ 
$$
|\n w(x)|+ |x| |D^2 w(x)|\leq C_2 \max(1, |z|^{-s_1})|x|^{1 -N}.
$$
\end{itemize}
\end{theorem}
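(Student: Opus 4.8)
The plan is to establish the three estimates in sequence: first I would record global boundedness and interior smoothness of $w$, then deduce the sharp decay rate (i), and finally extract the first- and second-order estimates (ii)--(iii) by a rescaling argument. As preliminaries, note that since $0\le s_2<s_1<2\le N-1$ the weights $|z|^{-s_i}$ are locally integrable, so a De Giorgi--Nash--Moser iteration adapted to the Hardy--Sobolev nonlinearity yields $w\in L^\infty(\mathbb{R}^N)$; and away from the singular line $\{z=0\}$ the right-hand side of \eqref{Euler-Lagrange1} is smooth in $w$, so standard interior elliptic regularity gives $w\in C^\infty(\mathbb{R}^N\setminus\{z=0\})$. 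Both facts use only that $w\in\mathcal{D}^{1,2}(\mathbb{R}^N)$ is a solution, and by Theorem \ref{TheoremA} we may assume $w=w(y,|z|)$.

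For the upper bound in (i) I would use the Kelvin transform $\tilde w(x)=|x|^{2-N}w(x/|x|^2)$. A direct computation using $2^*_{s_i}-1=\frac{N+2-2s_i}{N-2}$ shows that all conformal factors cancel exactly, so $\tilde w$ solves the very same equation \eqref{Euler-Lagrange1} on $\mathbb{R}^N\setminus\{0\}$. Hence the decay $w(x)\le c_2|x|^{2-N}$ as $|x|\to\infty$ is equivalent to boundedness of $\tilde w$ near the origin, which follows from the same Moser iteration now centered at $0$ (the origin being a removable singularity once $\tilde w\in L^\infty_{\loc}$); for $|x|\le 1$ the already-established boundedness gives the bound in the normalization $1+|x|^{N-2}$. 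For the lower bound, since $w>0$ is the Newtonian potential of the nonnegative-mass right-hand side $f=\lambda|z|^{-s_1}w^{2^*_{s_1}-1}+|z|^{-s_2}w^{2^*_{s_2}-1}$, the representation $w(x)=c_N\int_{\mathbb{R}^N}|x-\xi|^{2-N}f(\xi)\,d\xi$ gives $w(x)\sim c_N\big(\int_{\mathbb{R}^N}f\big)|x|^{2-N}$ as $|x|\to\infty$, and the total mass $\int f$ is necessarily strictly positive because $w$ is positive and decaying; continuity and positivity handle bounded $x$.

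For (ii)--(iii) the mechanism is scaling against the distance to the singular line. Fixing $x_0=(y_0,z_0)$ and setting $d=|z_0|=\dist(x_0,\{z=0\})$, on $B_{d/2}(x_0)$ the weight satisfies $|z|^{-s_i}\approx d^{-s_i}$, so the rescaling $w_d(\xi)=w(x_0+d\xi)$ converts \eqref{Euler-Lagrange1} into an equation with Hölder coefficients whose right-hand side has size $d^{2-s_1}$. Interior Schauder estimates, together with the oscillation control of $w$ over $B_d(x_0)$ coming from the near-singular-set expansion $w(y,z)=w(y,0)+O(|z|^{2-s_1})$, then yield $|\nabla w(x_0)|\lesssim d^{1-s_1}$ and $|D^2w(x_0)|\lesssim d^{-s_1}$. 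Combining these with the global decay (i) and distinguishing whether $x_0$ is within unit distance of $\{z=0\}$ produces (ii) for $|x|\le 1$ and (iii) for $|x|\ge 1$, the factor $\max(1,|z|^{-s_1})$ recording precisely this dichotomy.

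The hard part will be the sharp regularity across the singular line, namely the expansion $w(y,z)=w(y,0)+O(|z|^{2-s_1})$ that feeds the oscillation estimate: this is exactly the borderline behavior dictated by the model $-\Delta v=|z|^{-s_1}$, whose radial solution grows like $|z|^{2-s_1}$, and it cannot be read off from plain Schauder theory. I would obtain it either by a barrier/comparison argument built from this explicit model profile or by a careful pointwise analysis of the Newtonian potential of $|z|^{-s_1}$. The remaining technical care—making the Moser iteration uniform up to $\{z=0\}$ and, via the Kelvin transform, at infinity—is routine given local integrability of the weights, but the uniformity of constants is what ties the local and global estimates together.
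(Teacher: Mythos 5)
Your route is genuinely different from the paper's. The paper does not really prove (i): it cites the moving-plane literature for the symmetry and takes the two-sided decay from \cite{LLin}/\cite{FMS}; for (ii)--(iii) it works with the reduced profile $\theta(t,\rho)$, integrates the equation $\rho^{2-N}(\rho^{N-2}\theta_2)_2+\theta_{11}=\lambda\rho^{-s_1}\theta^{2^*_{s_1}-1}+\rho^{-s_2}\theta^{2^*_{s_2}-1}$ in $\rho$ to represent $\theta_2$ and $\theta_{12}$ as weighted averages, and reads the bounds $|\theta_2|+|\theta_{12}|\le C\rho^{1-s_1}$ and then $|\theta_{22}|\le C\rho^{-s_1}$ off the imported regularity of Proposition \ref{Regularity}. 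Your Kelvin-transform computation for the upper bound is correct (the conformal factors do cancel for both critical exponents and the cylindrical singular set is preserved), and that part is a legitimate alternative, modulo the nontrivial Brezis--Kato/Moser work at the origin, which sits on the singular line.

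There are, however, two concrete gaps. First, your lower bound rests on $f=\lambda|z|^{-s_1}w^{2^*_{s_1}-1}+|z|^{-s_2}w^{2^*_{s_2}-1}$ having nonnegative (indeed positive) total mass, but the theorem allows $\lambda<0$, and then $f<0$ in a whole neighborhood of $\{z=0\}$ because $|z|^{-s_1}$ dominates $|z|^{-s_2}$ there ($s_1>s_2$); ``$w$ positive and decaying'' only rules out $\int f<0$, not $\int f=0$, so the asymptotics $w\sim c_N(\int f)|x|^{2-N}$ do not deliver the lower bound as stated. One needs instead a comparison/Harnack argument at infinity for $-\Delta w+Vw\ge0$ with $V=|\lambda|\,|z|^{-s_1}w^{2^*_{s_1}-2}$. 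Second, in the scaling-plus-Schauder step the oscillation of $w$ over $B_{d/2}(x_0)$ is not $O(d^{2-s_1})$: the tangential variation $w(y,0)-w(y',0)$ contributes $O(d)$, so your argument yields $|\nabla w(x_0)|\lesssim d^{1-s_1}+1$, which is weaker than the claimed $C|z|^{1-s_1}$ precisely when $s_1<1$. The paper sidesteps this by estimating only the derivatives $\theta_2,\theta_{12},\theta_{22}$ (those involving the radial variable $\rho=|z|$) from the integrated equation, while the pure $t$-derivatives are controlled separately by the $\calC^\infty$-in-$y$ regularity; you would need to make the same separation. Finally, the expansion $w(y,z)=w(y,0)+O(|z|^{2-s_1})$ that you flag as ``the hard part'' is exactly the content the paper imports from Fabbri--Mancini--Sandeep, so as written your proof defers its essential ingredient rather than supplying it.
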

These two theorems will play a crucial role in the following which is our main result.
Then we have
\begin{theorem}\label{th:main1}
Let $N \geq 4$, $0\leq s_2 <s_1<2$ and   $\O$  be a   bounded domain of $\R^N$. Consider  $\G$ a smooth closed curve contained in $\O$.
Let  $h$ be  a continuous function such that the linear operator $-\D+h$ is coercive. Then there exists a positive constant $A^N_{s_1,s_2}$, only depending on $N$, $s_1$ and $s_2$ with the property that if  there exists  $y_0\in \G$  such that 
\begin{equation}\label{eq:h-bound-main-th-1}
A^N_{s_1, s_2} |\k (y_0)|^2+h(y_0) <0  \qquad\textrm{ for $N \geq 4$},
\end{equation}
then $c^*<\b^*$,
where $\k:\G\to \R^N$ is the curvature vector of   $\G$. 
Moreover there exists $u \in H^1_0(\Omega)\setminus \lbrace 0\rbrace$ non-negative solution of
$$
-\Delta u(x)+ h u(x)=\l \frac{u^{2^*_{s_1}-1}(x)}{\rho_\G^{s_1}(x)}+\frac{u^{2^*_{s_2}-1}(x)}{\rho_\G^{s_2}(x)} \qquad \textrm{ in $\O$}.
$$
\end{theorem}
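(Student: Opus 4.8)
The plan is to follow the standard concentration-compactness strategy: by Proposition~\ref{Prop-Compactness} it suffices to exhibit a single nonnegative test function along which the mountain pass energy drops strictly below the threshold $\b^*$, so the whole argument reduces to an asymptotic expansion of $\Psi$ over a family concentrating at the favorable point $y_0\in\G$. First I would fix arc-length Fermi coordinates in a tubular neighborhood of $y_0$: writing $\g$ for an arc-length parametrization with $\g(0)=y_0$ and choosing a smooth orthonormal frame $(E_1,\dots,E_{N-1})$ of the normal bundle, the map $F(t,z)=\g(t)+\sum_i z_i E_i(t)$ is a diffeomorphism near the origin, and in these coordinates
\begin{equation*}
dx=\big(1-\la\k(t),z\ra+O(|z|^2)\big)\,dt\,dz,\qquad \rho_\G\big(F(t,z)\big)=|z|\big(1+O(|z|)\big),
\end{equation*}
where $\k$ is the curvature vector. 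The curvature enters both the volume element and the singular weights, and because $w$ is radial in the normal variable the linear term $\la\k,z\ra$ integrates to zero, so only the quadratic curvature contribution survives.

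Next I would build the test family by transplanting the ground state $w$ of Theorem~\ref{TheoremA}. Using the scale invariance of~\eqref{Euler-Lagrange1}, set $w_\e(t,z)=\e^{-\frac{N-2}{2}}w(t/\e,z/\e)$ and define $u_\e(x)=\chi(x)\,w_\e\big(F^{-1}(x)\big)$ with $\chi$ a cutoff equal to $1$ near $y_0$ and supported in the coordinate chart. The decay estimates of Theorem~\ref{TheoremB} guarantee that the cutoff errors are of lower order and that every integral appearing below converges. Computing term by term and rescaling $\tilde t=t/\e$, $\tilde z=z/\e$, I expect
\begin{equation*}
\max_{t\ge0}\Psi(t u_\e)=\b^*+C_N\Big(A^N_{s_1,s_2}|\k(y_0)|^2+h(y_0)\Big)\,\mu_N(\e)+o\big(\mu_N(\e)\big),
\end{equation*}
where $C_N>0$, $\mu_N(\e)=\e^2$ for $N\ge5$ and $\mu_N(\e)=\e^2\log(1/\e)$ in the critical dimension $N=4$, and $A^N_{s_1,s_2}>0$ is the ratio of the weighted curvature integral of $w$ (pooling the corrections from both Hardy–Sobolev terms) to $\int w^2$.

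The main obstacle is precisely this expansion. Two sources of curvature must be tracked and combined: the factor $(1-\la\k,z\ra)$ in the volume element, which perturbs the gradient term through the induced metric, and the correction to $\rho_\G$, which perturbs both singular nonlinearities $\rho_\G^{-s_1}$ and $\rho_\G^{-s_2}$; carrying the two distinct exponents $2^*_{s_1}\neq2^*_{s_2}$ through consistently, and checking that the surviving quadratic curvature term has a definite sign so that it may be absorbed into the single constant $A^N_{s_1,s_2}$, is the delicate bookkeeping. The dimensional dichotomy is the second subtle point: the potential integral $\int_\O h\,u_\e^2$ produces $h(y_0)\,\e^2\int_{\R^N}w^2$, whose integral converges only for $N\ge5$ and diverges logarithmically when $N=4$, so the rate $\mu_N$ must be matched against the curvature contribution, which carries the same logarithm in dimension four. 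This is exactly why the hypothesis is stated for $N\ge4$, and why the mechanism fails when $\k\equiv0$ and $h\ge0$.

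Finally, granting the expansion, hypothesis~\eqref{eq:h-bound-main-th-1} makes the bracketed coefficient negative, so $\max_{t\ge0}\Psi(tu_\e)<\b^*$ for all small $\e$, whence $c^*\le\max_{t\ge0}\Psi(tu_\e)<\b^*$. By Proposition~\ref{Prop-Compactness} the functional $\Psi$ then satisfies the Palais–Smale condition at the level $c^*$, and the mountain pass theorem yields a nontrivial critical point $u$ at that level. Since $\Psi(|u|)\le\Psi(u)$ and, as recorded after~\eqref{Heat}, $c^*$ may be computed over nonnegative functions, we may take $u\ge0$, giving the desired nontrivial solution of the Euler–Lagrange equation on $\O$.
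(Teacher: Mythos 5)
Your proposal follows essentially the same route as the paper: Fermi coordinates along $\G$, a concentrating test family $u_\e$ built from the ground state $w$, an $\e^2$ (resp.\ $\e^2|\ln\e|$ for $N=4$) expansion of $\max_{t\ge 0}\Psi(tu_\e)$ producing the coefficient $A^N_{s_1,s_2}|\k(y_0)|^2+h(y_0)$, and then the passage to a solution via the compactness statement (Proposition~\ref{th:main12}, i.e.\ Lemma~\ref{Prop-Compactness} combined with the mountain pass lemma). The only small deviation is that in the paper's tubular coordinates $\rho_\G\left(F_{y_0}(y,z)\right)=|z|$ exactly, so the curvature corrections to the singular integrals come entirely from the volume element $\sqrt{|g|}$ rather than from a correction to $\rho_\G$; this does not affect the validity of your outline.
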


The effect    of curvatures in the study of   Hardy-Sobolev  inequalities have been intensively studied in the recent years.  For each of these works, the sign of the curvatures at the point of singularity plays important roles for the existence   a solution.  The first paper, to our knowledge, being the one  of Ghoussoub and Kang \cite{GK} who considered the Hardy-Sobolev inequality with singularity at the boundary.  For more  results in this direction,   see the works of    Ghoussoub and Robert in \cite{GR2,GR3,GR4, GR51}, Demyanov and Nazarov \cite{DN}, Chern and Lin \cite{CL}, Lin and Li \cite{LLin}, the first
 author, Fall and Minlend in \cite{FMT} and the references there in.
The Hardy-Sobolev inequality with interior singularity on Riemannian manifolds have been studied by Jaber \cite{Jaber1} and the first author \cite{Thiam-Hardy}. Here also the impact of the scalar curvature at the point singularity plays an important role for the existence of minimizers in higher dimensions $N\geq 4$.   The paper  \cite{Jaber1} contains also existence result under positive mass condition for $N=3$. We point out theat the 3-dimensional version of this paper is presented in \cite{Thiam-Mass-Effect}. The existence of  solution does not depends on the local geometry of the singularity but on the regular part of the Green function of the operator $-\Delta+h$.\\

The proof of Theorem \ref{th:main1} relies on test function methods. Namely to build appropriate test functions allowing to compare $c^*$ and $\b^*$.  While it  always holds that $c^*\leq \b^*$, our main task is  to find a function for which  $c^*<\b^*$, see Section \ref{Section4}. This then allows to recover compactness and thus every minimizing sequence  for $ c^*$ converges to   a minimizer, up to a subsequence. Building these approximates solutions requires to have sharp decay estimates of a  minimizer $w$ for $\b^*$, see Section \ref{Section3}. Section \ref{Section2} is devoted to the  local parametrization and computation of the local metric.
\section{Proof of Theorem \ref{TheoremA} and Theorem \ref{TheoremB}}\label{Section3}
\begin{theorem}
Let $N \geq 3$, $x:=(y, z) \in \R\times \R^{N-1}$, $0<s_2<s_1<2$ and $\l\in \R$. Then there exists $w\in \calD^{1,2}(\R^N)$ positive, satisfying
\begin{equation}\label{Euler-Lagrange-Equation}
-\D w=\l \frac{w^{2^*_{s_1}-1}}{|z|^{s_1}}+\frac{w^{2^*_{s_2}-1}}{|z|^{s_2}} \qquad \textrm{ in } \R^N.
\end{equation}
\end{theorem}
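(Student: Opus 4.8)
The plan is to realize $w$ as a minimizer of $\Phi$ on its Nehari manifold and to obtain the symmetry by rearrangement. First I would record the variational framework: by the cylindrical Hardy--Sobolev inequality the maps $u\mapsto\int_{\R^N}|z|^{-s_i}|u|^{2^*_{s_i}}\,dx$ are continuous on $\calD^{1,2}(\R^N)$, so $\Phi$ is well defined and of class $C^1$ and $\b^*$ is finite and strictly positive. Since $2<2^*_{s_1}<2^*_{s_2}$ and the top term carries the fixed negative coefficient $-1/2^*_{s_2}$, for every $u\ge 0$, $u\neq 0$ the fibering map $t\mapsto\Phi(tu)$ has a unique critical point $t(u)>0$, which is its global maximum; hence $\b^*=\inf_{\mathcal N}\Phi$ with $\mathcal N=\{u\neq 0:\langle\Phi'(u),u\rangle=0\}$, and because $\mathcal N$ is a natural constraint (the unique fibering maximum makes the Lagrange multiplier vanish) any minimizer of $\Phi$ over $\mathcal N$ is a free critical point, i.e. a weak solution of \eqref{Euler-Lagrange-Equation}. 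On $\mathcal N$ one has $\Phi(u)=(\tfrac12-\tfrac1{2^*_{s_1}})\l\int|z|^{-s_1}|u|^{2^*_{s_1}}+(\tfrac12-\tfrac1{2^*_{s_2}})\int|z|^{-s_2}|u|^{2^*_{s_2}}$, from which coercivity of $\Phi$ on $\mathcal N$, and thus boundedness in $\calD^{1,2}(\R^N)$ of any minimizing sequence, follows.

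Next I would take a minimizing sequence $u_n\ge 0$ on $\mathcal N$; by Ekeland's principle it may be assumed to be a Palais--Smale sequence for $\Phi$ at level $\b^*$, and by the above it is bounded. Here the structural difficulty enters: each term of $\Phi$ is invariant under the dilations $u\mapsto\mu^{(N-2)/2}u(\mu\,\cdot)$ \emph{and} under the axial translations $u(y,z)\mapsto u(y-\tau,z)$ along the singular set $\{z=0\}$, so $\Phi$ is invariant under a two-parameter non-compact group and the weighted embeddings are not compact. To tame the transverse modes and prepare the symmetry statement, I would replace $u_n$ by its Schwarz symmetrization in the variable $z$ (performed on each slice $\{y=\mathrm{const}\}$): since $|z|^{-s_i}$ is radial and nonincreasing in $|z|$, the Hardy--Littlewood and P\'olya--Szeg\H{o} inequalities preserve the weighted nonlinear integrals and do not increase the Dirichlet energy, so after the fibering reprojection one still has a minimizing sequence, now consisting of functions radially symmetric and nonincreasing in $z$. (Alternatively, the method of moving planes in the $z$-directions yields the same radial symmetry a posteriori.)

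The main obstacle is the compactness step, which I would resolve with the concentration--compactness principle applied modulo the remaining symmetries. After recentering in $y$ and rescaling so that a fixed fraction of the weighted mass $|z|^{-s_2}|u_n|^{2^*_{s_2}}\,dx$ sits in the unit ball, I would run Lions' dichotomy on these densities. Vanishing is excluded because on $\mathcal N$ the weighted norms stay bounded away from $0$ while, for $z$-symmetric functions, the improved compactness in the directions transverse to the axis forbids the density from spreading to zero; dichotomy is excluded by the strict subadditivity of $\b^*$ under a splitting of the mass, a consequence of the exact scale invariance of the two critical terms (a genuine split would produce two separate bubbles whose combined energy strictly exceeds $\b^*$). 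Thus, along a subsequence and after the symmetries, $u_n\to w$ strongly in $\calD^{1,2}(\R^N)$, with $w\ge 0$, $w\neq 0$ attaining $\b^*$.

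Finally I would upgrade $w$ to a positive classical solution. Being a constrained minimizer, $w$ solves \eqref{Euler-Lagrange-Equation}; it is radial and nonincreasing in $z$ by construction, so $w(x)=\theta(y,|z|)$ for some $\theta:\R\times\R_+\to\R_+$. Writing the equation away from the axis as $-\D w=c(x)\,w$ with $c(x)=\l|z|^{-s_1}w^{2^*_{s_1}-2}+|z|^{-s_2}w^{2^*_{s_2}-2}\in L^p_{\loc}$, local elliptic regularity together with the strong maximum principle (Harnack inequality) forces $w>0$ throughout $\R^N$. I expect the genuinely delicate point to be the simultaneous exclusion of vanishing and dichotomy in the presence of both the dilation and the axial-translation invariances; the $z$-symmetrization is precisely what makes the transverse directions compact and reduces the concentration analysis to the one-dimensional behaviour along the axis.
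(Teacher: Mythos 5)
The paper does not actually prove this statement: it simply invokes [\cite{LLin}, Theorem 1.2] and omits the argument, so there is no written proof to match yours against line by line. Your outline --- Nehari manifold and fibering maps, partial Schwarz symmetrization in $z$, concentration--compactness modulo the dilation and axial-translation groups, then regularity and the strong maximum principle --- is the standard route and is essentially the one the cited reference follows. For $\lambda>0$ I see no structural objection: you correctly identify that both weighted terms are invariant under $u\mapsto\mu^{(N-2)/2}u(\mu\,\cdot)$ and under translations in $y$, and that concentration away from the axis $\{z=0\}$ is excluded because $2^*_{s_1},2^*_{s_2}<2^*$, so the only limit profiles solve the original equation and the single-bubble argument closes at the level $\b^*$.

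There is, however, a genuine gap for $\lambda<0$, which the statement allows. Write $\Phi(tu)=\frac{t^2}{2}A-\frac{\lambda t^{2^*_{s_1}}}{2^*_{s_1}}B-\frac{t^{2^*_{s_2}}}{2^*_{s_2}}C$ with $A=\int|\n u|^2$, $B=\int|z|^{-s_1}|u|^{2^*_{s_1}}$, $C=\int|z|^{-s_2}|u|^{2^*_{s_2}}$. Symmetrization in $z$ gives $A^*\le A$, $B^*\ge B$, $C^*\ge C$; when $\lambda<0$ the middle term carries the \emph{positive} coefficient $-\lambda/2^*_{s_1}$, so increasing $B$ raises the fibering maximum and the symmetrized sequence need not remain minimizing. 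Your fallback, moving planes in the $z$-directions, fails for the same reason: the nonlinearity $\lambda|z|^{-s_1}u^{2^*_{s_1}-1}+|z|^{-s_2}u^{2^*_{s_2}-1}$ is no longer nonincreasing in $|z|$ when $\lambda<0$. Thus for $\lambda<0$ the concentration--compactness (or a Struwe-type profile decomposition) must be run without a priori cylindrical symmetry, which is precisely the delicate part delegated to \cite{LLin}; either restrict your argument to $\lambda\ge 0$ or supply this case separately. A smaller repair: your identity $\Phi(u)=\lambda(\tfrac12-\tfrac1{2^*_{s_1}})B+(\tfrac12-\tfrac1{2^*_{s_2}})C$ on $\mathcal{N}$ has a negative first coefficient when $\lambda<0$ and does not yield coercivity; eliminating $B$ via the constraint instead gives $\Phi(u)=(\tfrac12-\tfrac1{2^*_{s_1}})\int|\n u|^2+(\tfrac1{2^*_{s_1}}-\tfrac1{2^*_{s_2}})C$, whose coefficients are positive for every $\lambda$, so boundedness of minimizing sequences does hold in all cases.
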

\begin{proof}
Applying similar arguments as in [\cite{LLin}, Theorem 1.2], it is easy to prove this result. So we omit the proof.
\end{proof}
%
%
%
%
%
%
%
%
%
Next we will establish symmetry and decay estimates properties of positive solutions $u \in \calD^{1,2}(\R^N)$ of the following Euler-Lagrange equation \eqref{Euler-Lagrange-Equation}. Rewritten equation \eqref{Euler-Lagrange-Equation} as follows
$$
-\D u=\frac{f(x)}{|z|^{s_1}} u+ \frac{g(x)}{|z|^{s_1}},
$$
where $f, g \in L^P_{loc}(\R^N)$ for some $p> \frac{N}{2-s_1}$, then we have
\begin{proposition}\label{Regularity}
Let $u$ is a solution of the Euler-Lagrange equation \eqref{Euler-Lagrange-Equation}. We assume that
$$
\begin{cases}
\displaystyle s_1< 1+\frac{1}{N}\qquad&\textrm { if  $N \geq 4$ }\\\
\displaystyle s_1<\frac{3}{2} &\textrm{ if  $N=3$}.
\end{cases}
$$
Then $u \in \calC^\infty$ in the $y$ variable while, in the $z$ variable, it is $\calC^{1, \alpha}$ for all $\alpha<1-s_1$ if $s_1<1$ and $\calC^{0, \alpha}(\R^N)$ for all $\alpha <2-s_1$ if $1\leq s_1 <2$.
\end{proposition}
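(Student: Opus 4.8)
The plan is to run a three-step regularity scheme adapted to the anisotropic weight $|z|^{-s_1}$, whose singular set is the line $\{z=0\}$ of codimension $N-1$. The guiding heuristic is the model equation $-\D u=|z|^{-s_1}$: its solution that is radial in the $z$-variables behaves like $|z|^{2-s_1}$ near $\{z=0\}$, which already predicts the two regimes in the statement — one full derivative plus Hölder exponent $1-s_1$ when $s_1<1$, and Hölder exponent $2-s_1$ when $1\le s_1<2$. Because the weight does not depend on $y$, the equation \eqref{Euler-Lagrange-Equation} is invariant under translations in the $y$-direction, which is what will eventually force $\calC^\infty$-regularity in $y$.

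First I would prove $u\in L^\infty_{\loc}(\R^N)$. Writing \eqref{Euler-Lagrange-Equation} as $-\D u = V u$ with $V(x)=\l|z|^{-s_1}u^{2^*_{s_1}-2}+|z|^{-s_2}u^{2^*_{s_2}-2}$, the weighted potential $V$ is, relative to the Hardy–Sobolev inequality \eqref{Hardy-Sobolev}, a borderline perturbation: testing the integral $\int |z|^{-s_1} f\,\varphi^2$ against \eqref{Hardy-Sobolev} shows that the critical integrability of $f$ is exactly $p=\frac{N}{2-s_1}$. I would therefore run a weighted Brezis–Kato/Moser iteration — testing with $u\,\min(u,L)^{2(\beta-1)}$, splitting $V$ into a large bounded part and a small borderline part, and using \eqref{Hardy-Sobolev} in place of the plain Sobolev inequality — to upgrade $u\in L^{2^*}(\R^N)$ to $u\in L^q_{\loc}$ for all $q<\infty$, hence to $u\in L^\infty_{\loc}$. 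The hypothesis $s_1<1+\frac{1}{N}$ (resp.\ $s_1<\frac32$ for $N=3$) is what keeps the relevant exponents in the admissible range so that this iteration closes and the coefficients $f,g$ indeed land in $L^p_{\loc}$ with $p>\frac{N}{2-s_1}$.

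With $u$ locally bounded, the right-hand side $F:=\l|z|^{-s_1}u^{2^*_{s_1}-1}+|z|^{-s_2}u^{2^*_{s_2}-1}$ obeys the pointwise bound $|F(x)|\le C|z|^{-s_1}$ on compact sets (using $s_2<s_1$). For the $y$-direction I would exploit the translation invariance: the difference quotients $\tau_h u:=\frac{u(y+h,z)-u(y,z)}{h}$ solve a linear equation $-\D(\tau_h u)=c_h(x)\,\tau_h u$ with $|c_h|\le C|z|^{-s_1}$ locally, uniformly in $h$; passing to the limit shows $\partial_y u$ exists and solves the linearized equation, and a bootstrap on $k$ yields $\partial_y^k u$ with right-hand sides again controlled by $C|z|^{-s_1}$ times bounded factors. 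This gives $u\in\calC^\infty$ in the $y$-variable.

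The decisive step is the sharp Hölder regularity in $z$. A direct Calderón–Zygmund estimate only gives $u\in W^{2,p}_{\loc}$ for $p<\frac{N-1}{s_1}$ and hence, by Sobolev embedding, the exponent $2-\frac{N}{p}$, which falls short of the optimal values $1-s_1$ and $2-s_1$. To reach the sharp thresholds I would instead represent $u$ through the Newtonian potential of $F$ and estimate directly: using $|F|\le C|z|^{-s_1}$ together with the explicit $|z|^{2-s_1}$ behaviour of the model solution, one bounds the first- and second-order difference quotients of $u$ in the $z$-variables at precisely the rates $|z|^{1-s_1}$ (giving $\calC^{1,\alpha}$ for $\alpha<1-s_1$ when $s_1<1$) and $|z|^{2-s_1}$ (giving $\calC^{0,\alpha}$ for $\alpha<2-s_1$ when $1\le s_1<2$). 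This potential-theoretic estimate near the singular line is the main obstacle, because the isotropic elliptic theory cannot see the $(N-1)$-dimensional nature of the weight; the sharp gain must be extracted by hand from the structure of $|z|^{-s_1}$, and this is where essentially all the technical work lies.
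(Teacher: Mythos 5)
You should first be aware that the paper does not actually prove Proposition \ref{Regularity}: it is stated and then attributed verbatim to Fabbri--Mancini--Sandeep \cite{FMS}, Lemmas 3.2 and 3.3. Your outline is therefore not competing with an argument in the paper but with the cited reference, and in fact it reconstructs precisely the strategy used there: a weighted Brezis--Kato/Moser iteration based on the Hardy--Sobolev inequality \eqref{Hardy-Sobolev} to get $u\in L^\infty_{\loc}$, difference quotients in the translation-invariant $y$-direction for smoothness in $y$, and a direct potential-theoretic estimate against the model profile $|z|^{2-s_1}$ to capture the sharp H\"older exponents $1-s_1$ and $2-s_1$ in $z$. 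Your observation that plain Calder\'on--Zygmund theory cannot reach these exponents because it is blind to the codimension-$(N-1)$ structure of the weight is exactly the right diagnosis.

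That said, as a proof the proposal is incomplete at the two places where the statement actually has content. First, the step you yourself identify as decisive --- the estimate of first and second difference quotients of the Newtonian potential of $F$, with $|F|\le C|z|^{-s_1}$, at the rates $|z|^{1-s_1}$ and $|z|^{2-s_1}$ --- is described but not carried out; without it one only recovers the suboptimal Sobolev-embedding exponent $2-\tfrac{N}{p}$ with $p<\tfrac{N-1}{s_1}$, which you correctly note falls short. Second, the hypothesis $s_1<1+\tfrac1N$ (resp.\ $s_1<\tfrac32$ for $N=3$) is asserted to be ``what keeps the relevant exponents in the admissible range,'' but no computation is given showing where this threshold enters the iteration or the potential estimate; since this is the only quantitative hypothesis in the proposition, a complete proof must exhibit the exponent bookkeeping that produces exactly this bound. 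The architecture is sound and matches the source the authors rely on, but these two steps are where all the work lies, and they are left as declarations of intent.
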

This result is  due to Fabbri-Mancini-Sandeep[\cite{FMS}, Lemma 3.2 and Lemma 3.3]. 
This then allows  the following symmetry and decay estimates result.
\begin{proposition}\label{Prop-Decay-Esti1}
Let $u$ be a solution of the Euler-Lagrange equation \eqref{Euler-Lagrange-Equation}. Then
\begin{itemize}
\item[(i)] the function $u$ depends only on $y$ and $|z|$\\
\item[(ii)] there exists two constants $0<c_1<c_2$ such that:
\begin{equation}\label{Decay-Est1}
\frac{c_1}{1+|x|^{N-2}}\leq u(x) \leq \frac{c_2}{1+|x|^{N-2}}, \qquad \forall x\in \R^N.
\end{equation}
\end{itemize}
\end{proposition}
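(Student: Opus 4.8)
The plan is to treat the two assertions separately, exploiting the three symmetries of \eqref{Euler-Lagrange-Equation}: invariance under translations in the $y$-variable, invariance under rotations of the $z$-variable about the axis $\{z=0\}$, and, because both exponents are critical for the Hardy--Sobolev scaling, invariance under the Kelvin transform. Assertion (i) will follow from the method of moving planes carried out in the $z$-directions, while assertion (ii) will be obtained by combining the Kelvin invariance with the interior regularity of Proposition \ref{Regularity} for the upper bound and a comparison with the Newtonian barrier $|x|^{2-N}$ for the lower bound.

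For (i), I would fix a unit vector $\omega \in \mathbb{R}^{N-1}$ and, writing $x=(y,z)$, consider the hyperplanes $T_\mu = \{x : z\cdot\omega = \mu\}$, the half-spaces $\Sigma_\mu = \{x : z\cdot\omega > \mu\}$, and the reflection $x^\mu$ of $x$ across $T_\mu$. A direct computation gives, for $\mu \ge 0$ and $x \in \Sigma_\mu$, that $|z^\mu|^2 - |z|^2 = 4\mu(\mu - z\cdot\omega) \le 0$, so the reflected point lies closer to the singular axis and the weights are monotone: $|z^\mu|^{-s_i} \ge |z|^{-s_i}$. Setting $w_\mu(x) = u(x^\mu) - u(x)$ and subtracting the equations, this monotonicity together with the mean-value expansion of the superlinear terms $u^{2^*_{s_i}-1}$ shows that $w_\mu$ satisfies a linear differential inequality to which the maximum principle applies on $\Sigma_\mu$, the extra nonnegative source coming from the weight pinning the admissible symmetry plane to the axis. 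Starting the plane at $\mu=+\infty$, where the decay from part (ii) forces $w_\mu \ge 0$, and sliding it inward, one shows the process cannot stop before $\mu=0$; running the symmetric procedure from $\mu=-\infty$ then yields $u(x)=u(x^0)$ for every $x$, i.e. $u$ is symmetric across $\{z\cdot\omega=0\}$. Since $\omega$ is arbitrary, $u$ is invariant under all reflections fixing the axis, hence under all of $O(N-1)$ acting on $z$, which is exactly the claim $u(x)=\theta(y,|z|)$.

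For the upper bound in (ii), I would introduce the Kelvin transform $\tilde u(x) = |x|^{2-N} u(x/|x|^2)$. Using $|z/|x|^2| = |z|/|x|^2$ and the identity $2^*_{s_i}-1 = \tfrac{N+2-2s_i}{N-2}$, all powers of $|x|$ cancel and $\tilde u$ solves the same equation \eqref{Euler-Lagrange-Equation}; since the Kelvin transform preserves the Dirichlet norm, $\tilde u \in \mathcal{D}^{1,2}(\mathbb{R}^N)$ is again a finite-energy solution. By Proposition \ref{Regularity} it is locally bounded up to the axis, in particular near the origin, whence $u(x/|x|^2) \le C|x|^{N-2}$; rewriting this at the point $x/|x|^2$ gives $u(x) \le C|x|^{2-N}$ for large $|x|$, and combining with local boundedness near the origin yields $u(x) \le c_2/(1+|x|^{N-2})$. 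For the lower bound, $u>0$ is a positive supersolution $-\Delta u \ge 0$ once the right-hand side $f(x)=\lambda|z|^{-s_1}u^{2^*_{s_1}-1}+|z|^{-s_2}u^{2^*_{s_2}-1}$ is seen to be nonnegative in the relevant regime (immediate for $\lambda\ge0$, and otherwise read off from the Green representation $u(x)=c_N\int_{\mathbb{R}^N}|x-\xi|^{2-N}f(\xi)\,d\xi$ by keeping only the dominant term $|z|^{-s_2}u^{2^*_{s_2}-1}$ on a fixed ball where $u$ is bounded below). Comparing $u$ with $c\,|x|^{2-N}$ on exterior annuli $\{R_0 \le |x| \le R\}$, the barrier being harmonic, $u$ superharmonic, $u \ge c|x|^{2-N}$ on $\{|x|=R_0\}$ by positivity and continuity, and both tending to $0$ as $R \to \infty$, the minimum principle forces $u(x) \ge c_1 |x|^{2-N}$ for $|x| \ge R_0$, and local positivity closes the estimate to $u(x)\ge c_1/(1+|x|^{N-2})$.

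The main obstacle, common to both parts, is the singular weight $|z|^{-s_i}$ along the axis. In the moving-plane step it obstructs a naive maximum principle because the linearized potential is itself singular; the remedy is to run the argument on the regular region away from the axis (or on narrow end-caps) and to invoke Proposition \ref{Regularity} to control the behaviour across $\{z=0\}$, while the decay of part (ii) is what legitimises starting the plane at infinity, so that (ii) must be established before, or simultaneously with, (i). The second delicate point is the sign of $\lambda$: for $\lambda<0$ the first Hardy--Sobolev term contributes negatively, so both the differential inequality for $w_\mu$ and the superharmonicity used for the lower bound have to be argued through the Green representation and the dominance of the $s_2$-term rather than from pointwise positivity of the right-hand side.
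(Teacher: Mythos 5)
Your strategy for the symmetry coincides with what the paper itself does: the paper's entire proof of this proposition is the single sentence ``the proof of the symmetry is based on the moving plane method'' followed by a list of citations, and it offers no argument whatsoever for the decay estimate (ii). Your Kelvin-transform argument for the upper bound and the harmonic-barrier comparison for the lower bound therefore supply content the paper omits, and both are correct for $\l\ge 0$: the exponent bookkeeping $|x|^{-N-2}\cdot|x|^{2s_i}\cdot|x|^{(N-2)(2^*_{s_i}-1)}=1$ does make \eqref{Euler-Lagrange-Equation} Kelvin-invariant, and the reflection identity $|z^\mu|^2-|z|^2=4\mu(\mu-z\cdot\omega)\le 0$ gives exactly the weight monotonicity needed to slide the planes toward the axis. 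Two caveats even in that case: the local boundedness of the Kelvin transform near the origin rests on Proposition \ref{Regularity}, which is stated only under the extra restriction $s_1<1+\frac{1}{N}$ for $N\ge 4$, so your upper bound silently inherits that restriction although the proposition is asserted for all $s_1<2$; and starting the plane at $\mu=+\infty$ uses the decay, so the logical order must be upper bound first, then symmetry, then lower bound --- you note this, and it is consistent.

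The genuine gap is the case $\l<0$, which is within the scope of the statement (the equation is posed for $\l\in\R$) and which you flag but do not close. Two distinct steps break. First, in the moving-plane inequality for $w_\mu=u(x^\mu)-u(x)$ the term $\l\bigl(|z^\mu|^{-s_1}u(x^\mu)^{2^*_{s_1}-1}-|z|^{-s_1}u(x)^{2^*_{s_1}-1}\bigr)$ has the wrong sign precisely because $|z^\mu|^{-s_1}\ge|z|^{-s_1}$: the combined nonlinearity is no longer nonincreasing in $|z|$, so the linearized operator acquires a source term of the wrong sign and the maximum principle argument does not go through as written. Second, for the lower bound, superharmonicity fails: $\l|z|^{-s_1}u^{2^*_{s_1}-1}+|z|^{-s_2}u^{2^*_{s_2}-1}\ge 0$ is equivalent to $u\ge |\l|^{\frac{N-2}{2(s_1-s_2)}}|z|^{-\frac{N-2}{2}}$, which is incompatible both with the boundedness of $u$ near the axis and with $u\sim|x|^{2-N}$ at infinity. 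Your proposed remedy --- reading positivity off the Green representation ``by keeping only the dominant term'' --- is not an argument: when $\l<0$ the density in $u=c_N|x|^{2-N}*f$ changes sign, and discarding its negative part is exactly what has to be justified, e.g.\ by a quantitative bound showing the $s_2$-term dominates the $s_1$-term on the region controlling the convolution, or by a Harnack-type inequality for $-\D+V$ with $V=|\l|\,|z|^{-s_1}u^{2^*_{s_1}-2}$. Neither is carried out here; to be fair, the paper does not address this case either, so on this point your proposal is no less complete than the source, but it is not a proof of the proposition as stated.
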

\begin{proof}
The proof of the symmetry is based on the moving plane method, see for instance \cite{AAZ, BNAZ, DNAZ, CLAZ, SAZ} and references therein.
\end{proof}
We close this section by proving the following decay properties of $w$ involving its higher derivatives.%
\begin{proposition}\label{Prop-Decay-Esti2}
Let $w$ be a solution of \eqref{Euler-Lagrange-Equation}.hen there exist positive constant $C$, only depending on $N$ and $s_1$ and $s_2$, such that
\item[(ii)] For   $|x|= |(t,z)|\leq 1$  
$$ 
|\n w (x)|+ |x| |D^2 w (x)|\leq C_2 |z|^{1-s_1}
$$
\item[(iii)] For   $|x|= |(t,z)|\geq 1$ 
$$
|\n w(x)|+ |x| |D^2 w(x)|\leq C_2 \max(1, |z|^{-s_1})|x|^{1 -N}.
$$
\end{proposition}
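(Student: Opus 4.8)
The plan is to combine the a priori two-sided decay of $w$ from Proposition \ref{Prop-Decay-Esti1} with the representation of $w$ as a Newtonian potential and with a rescaling argument feeding interior elliptic (Schauder) estimates. First I would rewrite \eqref{Euler-Lagrange-Equation} as $-\Delta w = F$, where
$$
F(x) := \lambda\,\frac{w^{2^*_{s_1}-1}(x)}{|z|^{s_1}} + \frac{w^{2^*_{s_2}-1}(x)}{|z|^{s_2}}.
$$
Using $2^*_{s}-1 = \frac{N+2-2s}{N-2}$ together with $c_1(1+|x|^{N-2})^{-1}\le w \le c_2(1+|x|^{N-2})^{-1}$, one obtains the pointwise control
$$
|F(x)| \le C\Big(|z|^{-s_1}(1+|x|)^{-(N+2-2s_1)} + |z|^{-s_2}(1+|x|)^{-(N+2-2s_2)}\Big),
$$
which records both the singularity of $F$ along the axis $\{z=0\}$ and its decay at infinity. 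Since $0<s_2<s_1<2$, the first term dominates near the axis for $|x|\le 1$ and governs all the claimed rates.

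For the first-order estimate I would use that, as $w\in\calD^{1,2}(\R^N)$ solves $-\Delta w=F$ with $w\to 0$ at infinity,
$$
w(x)=c_N\int_{\R^N}\frac{F(\zeta)}{|x-\zeta|^{N-2}}\,d\zeta,\qquad \nabla w(x)=-c_N(N-2)\int_{\R^N}\frac{(x-\zeta)\,F(\zeta)}{|x-\zeta|^{N}}\,d\zeta,
$$
so that $|\nabla w(x)|\le C\int_{\R^N}|x-\zeta|^{1-N}F(\zeta)\,d\zeta$. I would estimate this weighted Riesz potential by splitting $\R^N$ into a near zone of $\zeta$ comparable to the local scale $\min(|z|,\max(1,|x|))$ and a far zone. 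In the near zone the decay factor is comparable to a constant, and the model integral $\int |x-\zeta|^{1-N}|z_\zeta|^{-s_1}\,d\zeta$ is evaluated by the scaling change of variables $\zeta=x+|z|\,\eta$, which factors out exactly $|z|^{1-s_1}$ (the residual $\eta$-integral converging because $N-1<N$ near $\eta=0$ and $s_1<2\le N-1$ near the rescaled axis); this yields (ii). In the far zone the decay of $F$ integrated against the kernel produces, for $|x|\ge 1$, the factor $|x|^{1-N}$ together with the weight $\max(1,|z|^{-s_1})$ accounting for points close to the axis but far from the origin, giving (iii).

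For the second-order estimate I would pass to a rescaling argument. Fixing $x_0=(y_0,z_0)$ and setting $m_0:=\max(1,|x_0|)$ and $r_0:=\tfrac12\min(|z_0|,m_0)$, the ball $B_{r_0}(x_0)$ stays away from $\{z=0\}$, and there both $w$ and the weight $|z|^{-s_i}$ have controlled oscillation in the rescaled picture. The function $\tilde w(\xi):=w(x_0+r_0\xi)$ solves $-\Delta\tilde w=r_0^2\,F(x_0+r_0\xi)$ on $B_1$, whose right-hand side is Hölder continuous with norm controlled by the pointwise bound on $F$ above. Interior Schauder estimates then give
$$
\|D^2\tilde w\|_{L^\infty(B_{1/2})}\le C\big(\|\nabla\tilde w\|_{L^\infty(B_1)}+\|r_0^2\,F(x_0+r_0\cdot)\|_{C^{0,\alpha}(B_1)}\big),
$$
and inserting the gradient bound already proved (rescaled) yields the claim after undoing the scaling via $|D^2w(x_0)|=r_0^{-2}|D^2\tilde w(0)|$; the prefactor $|x|$ in $|x|\,|D^2w|$ and the powers of $|z|$ are precisely what the choice of $r_0$ and the first-order bound produce.

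The main obstacle is the sharp first-order bound near the singular axis: a naive rescaling controlled only by $\sup w$ yields the far weaker $|\nabla w|\lesssim |z|^{-1}$, so the gain down to $|z|^{1-s_1}$ must come from the cancellation encoded in the vector kernel $(x-\zeta)|x-\zeta|^{-N}$, equivalently from estimating the oscillation of $w$ rather than its size. Making the weighted Riesz potential estimate uniform as $|z|\to 0$, and matching the near-zone scaling exponent with the far-zone decay so that the two regimes join continuously across $|x|=1$, is where the real work lies; the symmetry $w=\theta(y,|z|)$ from Proposition \ref{Prop-Decay-Esti1} is used throughout to reduce the transverse derivatives to the single radial variable $|z|$.
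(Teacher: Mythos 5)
Your route --- a Newtonian-potential representation for $\nabla w$ followed by a rescaled interior Schauder estimate on balls $B_{r_0}(x_0)$ with $r_0\sim\min(|z_0|,1)$ for $D^2w$ --- is genuinely different from the paper's, which never touches the fundamental solution. The paper uses the symmetry $w=\theta(y,|z|)$ from Proposition \ref{Prop-Decay-Esti1} to rewrite \eqref{Euler-Lagrange-Equation} as the two--variable equation $\rho^{2-N}(\rho^{N-2}\theta_2)_2+\theta_{11}=\lambda\rho^{-s_1}\theta^{2^*_{s_1}-1}+\rho^{-s_2}\theta^{2^*_{s_2}-1}$ and integrates it in $\rho$ from the axis, obtaining the closed formula $\theta_2(t,\rho)=\rho^{2-N}\int_0^\rho r^{N-2}\bigl(-\theta_{11}+\lambda r^{-s_1}\theta^{2^*_{s_1}-1}+r^{-s_2}\theta^{2^*_{s_2}-1}\bigr)\,dr$, and similarly for $\theta_{12}$ after differentiating in $t$. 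Feeding in the boundedness of $\theta$, $\theta_{11}$, $\theta_{111}$ from Proposition \ref{Regularity} gives $|\theta_2|+|\theta_{12}|\le C(\rho+\rho^{1-s_1}+\rho^{1-s_2})\le C\rho^{1-s_1}$ in one line, and the equation itself then yields $|\theta_{22}|\le C\rho^{-s_1}$.

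The gap in your argument is exactly the point you flag and then leave open. When $s_1<1$ the bound $|z|^{1-s_1}$ expresses a \emph{vanishing} of the transverse derivative at the axis, and the absolute-value Riesz-potential estimate you set up cannot produce it: your near-zone rescaling does give $|z|^{1-s_1}$, but the far zone $\{|\zeta-x|\gtrsim|z|\}$ contributes, after the dyadic summation, a quantity of order $\sum_k(2^k|z|)^{1-s_1}$, which for $1-s_1>0$ is dominated by its largest term and is therefore $O(1)$, not $O(|z|^{1-s_1})$. You correctly identify that one must exploit cancellation in the vector kernel (equivalently, estimate the oscillation of $w$ rather than its size), but you do not supply that argument, and it is the crux of the proposition; the defect then propagates into your Schauder step, whose output is controlled by the first-order bound. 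The paper's integration-from-the-axis device is precisely the missing idea: the representation $\theta_2=\rho^{2-N}\int_0^\rho r^{N-2}(\cdots)\,dr$ builds the vanishing in automatically, because the integrand is integrable at $r=0$. (A side remark, fair to both arguments: neither controls $\partial_y w=\theta_1$ by $|z|^{1-s_1}$ when $s_1<1$ --- it is merely bounded --- so (ii) is really an estimate on the $z$-derivatives and the singular second-order terms; for $s_1\ge1$ everything is absorbed and both approaches close without this caveat.)
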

\begin{proof}
Let $\theta: \R_+ \times \R_+ \to \R_+$ be a function such that
$$
w(x)=w(y, z)= \theta(y, |z|).
$$
Using polar coordinates, the function $\theta=\theta(t, \rho)$ verifies 
\be\label{eq:eq-satis-th}
\rho^{2-N} (\rho^{N-2} \th_2)_2+\th_{11}= \l\rho^{-s_1} \th^{2^*_{s_1}-1}+\rho^{-s_2} \th^{2^*_{s_2}-1} \qquad\textrm{ for $t,\rh\in \R_+$},
\ee
where $\theta_1$ and $\theta_2$ are respectively the derivatives of $\theta$ with respect to the first and the second variables. Then integrating this identity in  the $\rho$ variable, we therefore get, for every $\rho>0$,
\begin{align*}
\th_2(t,\rho)=-\frac{1}{\rho^{N-2}}\int_0^\rho  r^{N-2}  \th_{11} (t,r)dr&+\frac{\l}{\rho^{N-2}}\int_0^\rho r^{N-2} r^{-s_1} \th^{2^*_{s_1}-1} (t,r) dr\\\\
&\frac{1}{\rho^{N-2}}\int_0^\rho r^{N-2} r^{-s_2} \th^{2^*_{s_2}-1} (t,r) dr.
\end{align*}
Next differentiating with respect to the first variable, we get 
\begin{align*}
\th_{12}(t,\rho)=\frac{-1}{\rho^{N-2}}\int_0^\rho  r^{N-2}  \th_{111} (t,r)dr&+\frac{\l}{\rho^{N-2}}\int_0^\rho r^{N-2} r^{-s_1} \th_1(t,r) \th^{2^*_{s_1}-2} (t,r) dr\\\\
&+\frac{1}{\rho^{N-2}}\int_0^\rho r^{N-2} r^{-s_2} \th_1(t,r) \th^{2^*_{s_2}-2} (t,r) dr.
\end{align*}
By  Proposition \ref{Regularity} and the fact that  $2^*_{s_2}> 2^*_{s_1}\geq 2$, we obtain
\begin{equation}\label{As123}
|\th_2(t,\rho)|+ |\th_{12}(t,\rho)| \leq  C\left( \rho+\rho^{1-s_1}+\rho^{1-s_2}\right)  \leq C \rho^{1-s_1} \qquad\textrm{ for $|(t,\rho)|\leq 1$}.
\end{equation}
Now using this in \eqref{eq:eq-satis-th}, we get 
\begin{equation}\label{Max1}
|\th_{22}|\leq C \rh^{-s_1}, \quad \textrm{ for $|(t,\rho)|\leq 1$}.
\end{equation}
By \eqref{As123} and \eqref{Max1}, we obtain
$$
|\th_2(t,\rho)|+ |\th_{12}(t,\rho)|+\rho|\th_{22}|\leq  C\rho^{1-s_1}.
$$
Therefore, it easy follows that  
$$ 
|\n w (x)|+ |x| |D^2 w (x)|\leq C_2 |z|^{1-s_1}, \qquad \textrm{ for all $|x|= |(t,z)| \leq 1$}
$$
and for   $|x|= |(t,z)|\geq 1$ that
$$
|\n w(x)|+ |x| |D^2 w(x)|\leq C_2 \max(1, |z|^{-s_1})|x|^{1 -N}.
$$
This then completes the proof.
\end{proof}
\section{Local Parametrization and metric}\label{Section2}
Let    $\G\subset \R^N$ be  a smooth closed  curve. Let $(E_1;\dots; E_N)$ be an orthonormal basis of $\R^N$.
 For $y_0\in \G$ and $r>0$ small,  we consider the curve $\gamma:\left(-r, r\right) \to \G$,   parameterized by arclength such that $\gamma(0)=y_0$. Up to a translation and a rotation,  we may assume that $\g'(0)=E_1$.     We choose a smooth   orthonormal frame field $\left(E_2(y);...;E_N(y)\right)$ on the normal bundle of $\G$ such that $\left(\g'(y);E_2(y);...;E_N(y)\right) $ is an oriented basis of $\R^N$ for every $y\in (-r,r)$, with $E_i(0)=E_i$. \\
We fix the following notation, that will be used  a lot in the paper,
$$
 Q_r:=(-r,r)\times B_{\R^{N-1}}(0,r) ,
$$
where $B_{\R^k}(0,r)$ denotes the ball in $\R^k$ with radius $r$ centered at the origin.
 Provided $r>0$  small, the map $F_{y_0}: Q_r\to \O$, given by 
$$
 (y,z)\mapsto  F_{y_0}(y,z):= \gamma(y)+\sum_{i=2}^N z_i E_i(y),
$$
is smooth and parameterizes a neighborhood of $y_0=F_{y_0}(0,0)$. We consider $\rho_\G:\G\to \R$ the distance function to the curve given by 
$$
\rho_\G(y)=\min_{\ov y\in \Gamma}|y-\ov y|.
$$
In the above coordinates, we have 
\begin{equation}\label{eq:rho_Gamm-is-mod-z}
\rho_\G\left(F_{y_0}(x)\right)=|z| \qquad\textrm{ for every $x=(y,z)\in Q_r.$} 
\end{equation}
Clearly, for every $t\in (-r,r)$ and $i=2,\dots N$, there  are real numbers $\k_i(y)$ and $\tau^j_i(y)$  such that
\begin{equation}\label{DerivE_i}
E_i^\prime(y)= \kappa_i(y) \gamma^\prime(y)+\sum_{ {j=2} }^N\tau_i^j(y) E_j(y).
\end{equation}
The quantity  $\kappa_i(y)$ is the curvature in the $E_i(y)$-direction while $\tau_i^j(y)$ is the torsion from the osculating plane spanned by $\{\g'(y); E_j(y)\}$ in the direction  $E_i $.  We note that provided $r>0$ small, $\k_i$ and $\t_i^j$ are smooth functions on $(-r,r)$. Moreover, it is easy to see that 
\be\label{eq:tau-antisymm}
\t^j_i(y)=-\t^i_j(y) \qquad\textrm{ for $i,j=2,\dots, N$.   } 
\ee
The curvature vector is $\k:\G\to \R^N$  is defined as  $\k(\g(y)):=\sum_{i=2}^N \k_i(y) E_i(y)$ and its norm is given by  $|\k\g(y)|:=\sqrt{\sum_{i=2}^N\k^2_i(y)}$.
Next, we derive the expansion of the metric induced by the parameterization $F_{y_0}$ defined above.
For $x=(y,z) \in Q_r$, we define 
$$
g_{11}(x)=   {\de_y F_{y_0}}(x) \cdot   {\de_y F_{y_0}} (x) , \quad g_{1i}(x)=   {\de_y F_{y_0}} (x)\cdot   {\de_{z_i} F_{y_0}}(x) ,\quad  g_{ij}(x)=   {\de_{z_j} F_{y_0}}(x) \cdot   {\de_{z_i} F_{y_0}}(x).
$$
We have the following result.
\begin{lemma}\label{MaMetric}
There exits $r>0$, only depending on $\G$ and $N$, such that  for ever $x=(t,z)\in Q_r$ 
\begin{equation}\label{Vert}
\begin{cases}
\displaystyle g_{11}(x)=1+2\sum_{i=2}^Nz_i \kappa_i(0)+2y\sum_{i=2}^Nz_i \kappa^\prime_i(0)+\sum_{ij=2}^N z_i z_j \kappa_i(0) \kappa_j(0)+\sum_{ij=2}^N z_i z_j  \b_{ij}(0)+O\left(|x|^3\right)\\\
\displaystyle g_{1i}(x)=\sum_{j=2}^N z_j \tau^i_j(0)+y\sum_{j=2}^N z_j \left(\tau^i_j\right)^\prime(0)+O\left(|x|^3\right)\\\
\displaystyle g_{ij}(x)=\d_{ij},
\end{cases}
\end{equation}
where  $\b_{ij}(y):=\sum_{l=2}^N \tau_i^l(y) \tau_j^l(y).$
\end{lemma}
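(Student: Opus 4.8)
The plan is to compute the three metric coefficients directly from the definition of $F_{y_0}$ together with the frame evolution equation \eqref{DerivE_i}, and then Taylor expand the resulting coefficient functions in the arclength variable $y$ about $0$. First I would record the two partial derivatives of the parameterization. Since $\partial_{z_i} F_{y_0}(y,z) = E_i(y)$ for $i=2,\dots,N$, and, substituting \eqref{DerivE_i},
$$
\partial_y F_{y_0}(y,z) = \Big(1 + \sum_{i=2}^N z_i \kappa_i(y)\Big)\gamma'(y) + \sum_{j=2}^N \Big(\sum_{i=2}^N z_i \tau_i^j(y)\Big) E_j(y).
$$
The structural fact that makes everything collapse is that $(\gamma'(y), E_2(y),\dots,E_N(y))$ is orthonormal for every $y$, so each dot product decouples into its $\gamma'$-component and its normal components.

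Next I would read off the exact, pre-expansion coefficients. Orthonormality immediately gives $g_{ij}(x) = E_i(y)\cdot E_j(y) = \delta_{ij}$, and $g_{1i}(x) = \partial_y F_{y_0}\cdot E_i(y) = \sum_{j=2}^N z_j \tau_j^i(y)$, since $\gamma'\cdot E_i = 0$. For $g_{11}$ the cross terms vanish and the squared norm splits as
$$
g_{11}(x) = \Big(1 + \sum_{i=2}^N z_i \kappa_i(y)\Big)^2 + \sum_{i,j=2}^N z_i z_j \beta_{ij}(y),
$$
where $\beta_{ij}(y) = \sum_{l=2}^N \tau_i^l(y)\tau_j^l(y)$ is precisely the quantity in the statement; expanding the square produces the constant $1$, the linear-in-$z$ term $2\sum z_i \kappa_i(y)$, and the quadratic-in-$z$ term $\sum z_i z_j \kappa_i(y)\kappa_j(y)$.

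Finally I would Taylor expand the smooth functions $\kappa_i$, $\tau_j^i$, and $\beta_{ij}$ in $y$ about $0$ and bundle the remainders by order in $|x|$. Writing $\kappa_i(y) = \kappa_i(0) + y\kappa_i'(0) + O(y^2)$, and similarly for $\tau_j^i$, the linear-in-$z$ term of $g_{11}$ contributes $2\sum z_i\kappa_i(0) + 2y\sum z_i\kappa_i'(0)$ up to $O(|z|y^2)$, while the two quadratic-in-$z$ terms need only their values at $0$, since an extra factor of $y$ there is already $O(|x|^3)$; the same expansion applied to $g_{1i}$ gives $\sum_j z_j \tau_j^i(0) + y\sum_j z_j (\tau_j^i)'(0)$. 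Using $|y|\le|x|$ and $|z|\le|x|$, every discarded term (of type $|z|y^2$, $|z|^2 y$, and so on) is $O(|x|^3)$, which yields exactly the claimed expansions. The only genuinely non-computational point is the existence of a uniform $r$: that $F_{y_0}$ parameterizes a tubular neighborhood and that the Taylor remainders are controlled uniformly in the basepoint $y_0$ both follow from the smoothness and compactness of the closed curve $\Gamma$, which is where the dependence of $r$ on $\Gamma$ and $N$ alone enters. I expect the main obstacle to be nothing deeper than consistently tracking these error orders through the two expansions.
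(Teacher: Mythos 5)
Your computation is correct and complete: the orthonormality of the frame $(\gamma'(y),E_2(y),\dots,E_N(y))$, the substitution of \eqref{DerivE_i} into $\partial_y F_{y_0}$, and the Taylor expansion of $\kappa_i$, $\tau_j^i$, $\beta_{ij}$ at $y=0$ with the error bookkeeping $|z|y^2,\ |z|^2|y| = O(|x|^3)$ give exactly \eqref{Vert}. The paper itself omits the proof and refers to \cite{Fall-Thiam}, where the argument is precisely this direct computation, so your approach coincides with the intended one.
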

As a consequence we have the following result.
\begin{lemma}\label{MaMetricMetric}
There exists $r>0$ only depending on $\G$ and $N$, such that for every $x\in Q_r$, we have 
\begin{equation}\label{DeterminantMetric}
\sqrt{|g|}(x)=1+\sum_{i=2}^N z_i \kappa_i(0)+y\sum_{i=2}^N z_i \kappa_i^\prime(0)+\frac{1}{2}\sum_{ij=2}^N z_i z_j \kappa_i(0) \kappa_j(0)+O\left(|x|^3\right),
\end{equation}
where  $|g|$ stands for the determinant of $g$.
Moreover  $g^{-1}(x)$, the matrix  inverse   of $g(x)$,   has components given by
\begin{equation}\label{InverseMetric}
\begin{cases}
\displaystyle g^{11}(x)=1-2\sum_{i=2}^N z_i \kappa_i(0)-2y\sum_{i=2}^N z_i \kappa_i^\prime(0)+3\sum_{ij=2}^N z_i z_j \kappa_i(0) \kappa_j(0)+O\left(|x|^3\right)\\
\displaystyle g^{i1}(x)=-\sum_{j=2}^N z_j \tau^i_j(0)-y\sum_{j=2}^N z_j \left(\tau^i_j\right)^\prime(0)+2\sum_{j=2}^N z_l z_j \kappa_l(0) \tau^i_j(0)+O\left(|x|^3\right)\\
\displaystyle g^{ij}(x)=\d_{ij}+\sum_{lm=2}^N z_l z_m \tau^j_l(0) \tau^i_m(0)+O\left(|x|^3\right).
\end{cases}
\end{equation}
\end{lemma}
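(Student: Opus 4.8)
The plan is to obtain both \eqref{DeterminantMetric} and \eqref{InverseMetric} purely as algebraic consequences of the expansion in Lemma \ref{MaMetric}, by exploiting the block structure of the metric. Since $g_{ij}(x)=\delta_{ij}$ for $i,j\ge 2$, the matrix has the form
$$
g(x)=\begin{pmatrix} g_{11}(x) & \mathbf{b}(x)^{T}\\ \mathbf{b}(x) & I_{N-1}\end{pmatrix},
$$
where $\mathbf{b}(x)=(g_{12}(x),\dots,g_{1N}(x))^{T}$ and $I_{N-1}$ is the identity. Both $g_{11}-1$ and $\mathbf{b}$ are $O(|x|)$, so the entire lemma reduces to expanding scalar and vector quantities to second order in $|x|$.

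First I would treat the determinant. For a matrix of the above block form one has the exact identity $|g|=g_{11}-\mathbf{b}^{T}\mathbf{b}=g_{11}-\sum_{i=2}^{N}g_{1i}^{2}$. Inserting the expansions of Lemma \ref{MaMetric} and recalling $\beta_{ij}(0)=\sum_{l=2}^{N}\tau_i^{l}(0)\tau_j^{l}(0)$, the quadratic torsion contribution of $g_{11}$, namely $\sum_{ij}z_iz_j\beta_{ij}(0)$, is cancelled exactly by $\sum_{i}g_{1i}^{2}=\sum_{ij}z_iz_j\beta_{ij}(0)+O(|x|^{3})$, so $|g|$ depends only on the curvatures to this order. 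I would then expand $\sqrt{|g|}=\sqrt{1+a}=1+\tfrac12 a-\tfrac18 a^{2}+O(|x|^{3})$ with $a=|g|-1$; the only point requiring attention is that the first-order part $2\sum_i z_i\kappa_i(0)$ of $a$ feeds into $a^{2}$ and must be carried through together with the genuine second-order terms to obtain the expansion \eqref{DeterminantMetric}.

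For the inverse I would use the Neumann series $g^{-1}=(I+A)^{-1}=I-A+A^{2}+O(|x|^{3})$, where $A=g-I$ has $A_{11}=g_{11}-1$, $A_{1i}=A_{i1}=g_{1i}$ and $A_{ij}=0$ for $i,j\ge 2$. Reading off the components is then mechanical. For $g^{11}=1-A_{11}+(A^{2})_{11}$ one has $(A^{2})_{11}=A_{11}^{2}+\sum_{i\ge 2}g_{1i}^{2}$, in which the torsion terms cancel again and the curvature term acquires the coefficient $3$. For $g^{i1}=-g_{1i}+(A^{2})_{i1}$, the vanishing of $A_{ij}$ gives $(A^{2})_{i1}=g_{1i}A_{11}$, producing the mixed term $2\sum_{l,j}z_lz_j\kappa_l(0)\tau_j^{i}(0)$. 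For $g^{ij}=\delta_{ij}+(A^{2})_{ij}$ one finds $(A^{2})_{ij}=g_{1i}g_{1j}=\sum_{lm}z_lz_m\tau_l^{i}(0)\tau_m^{j}(0)+O(|x|^{3})$, which is the last line of \eqref{InverseMetric}.

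There is no conceptual obstacle: once the block structure is used, the lemma is a second-order Taylor computation. The only genuine difficulty is bookkeeping — consistently discarding cross products that are already $O(|x|^{3})$ while retaining the quadratic contributions produced by squaring the linear parts of $g_{11}$ and $g_{1i}$. The one structural fact worth isolating is the systematic cancellation of the torsion quantities $\beta_{ij}(0)$ between $g_{11}$ and $\sum_{i}g_{1i}^{2}$, which is precisely what forces the square root of the determinant and the leading curvature part of $g^{11}$ to involve only the $\kappa_i(0)$.
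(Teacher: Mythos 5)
Your method --- the exact block identity $|g|=g_{11}-\sum_{i\ge 2}g_{1i}^2$ for the determinant and the Neumann series $(I+A)^{-1}=I-A+A^{2}+O(|x|^{3})$ for the inverse --- is the natural one, and the paper itself gives no proof to compare against (it defers to \cite{Fall-Thiam}). The inverse-metric part of your argument checks out: the torsion quadratics in $(A^{2})_{11}$ cancel those in $A_{11}$, the curvature term acquires the coefficient $3$, and the entries $g^{i1}$, $g^{ij}$ come out as in \eqref{InverseMetric}; one can cross-check everything against the exact block-inverse formula, since here $g^{11}=1/|g|$.

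The gap is in the determinant. Carry out the step you describe: after the cancellation of $\sum_{ij}z_iz_j\beta_{ij}(0)$ you have $a:=|g|-1=2\sum_i z_i\kappa_i(0)+2y\sum_i z_i\kappa_i'(0)+\sum_{ij}z_iz_j\kappa_i(0)\kappa_j(0)+O(|x|^{3})$, so $\tfrac12 a$ contributes $+\tfrac12\sum_{ij}z_iz_j\kappa_i(0)\kappa_j(0)$ while $-\tfrac18 a^{2}=-\tfrac18\bigl(2\sum_i z_i\kappa_i(0)\bigr)^{2}+O(|x|^{3})$ contributes $-\tfrac12\sum_{ij}z_iz_j\kappa_i(0)\kappa_j(0)$. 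These cancel, and your procedure actually yields $\sqrt{|g|}=1+\sum_i z_i\kappa_i(0)+y\sum_i z_i\kappa_i'(0)+O(|x|^{3})$, \emph{not} \eqref{DeterminantMetric}; indeed $|g|=g_{11}-\sum_i g_{1i}^{2}=\bigl(1+\sum_i z_i\kappa_i(y)\bigr)^{2}$ exactly, so $\sqrt{|g|}$ is affine in $z$. You cannot simultaneously claim the $\beta_{ij}$ cancellation and the survival of the $\tfrac12\sum_{ij}z_iz_j\kappa_i(0)\kappa_j(0)$ term, yet your write-up asserts the computation ``obtains the expansion \eqref{DeterminantMetric}''. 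Note that the coefficient $3$ you (correctly) derive for $g^{11}=1/|g|$ is consistent only with the quadratic term being absent from $\sqrt{|g|}$, so the internal evidence of your own computation points to a spurious term in the printed statement rather than to an error in your block identities; either way you must flag and resolve the discrepancy instead of passing over it, particularly because this quadratic term is precisely what produces the $\frac{|\kappa(y_0)|^{2}}{2(N-1)}\int|z|^{2}(\cdot)\,dx$ contributions invoked later in Lemma \ref{Lem2}.
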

We will also need the following estimates result.
\begin{lemma}\label{lem:to-prove}
Let $v\in \cD^{1,2}(\R^N)$, $N\geq 3,$ satisfy $v(y,z)=\ov\th(|y|,|z|)$, for some some function $\ov\th:\R_+\times \R_+\to \R$. Then  for $0<r<R$, we have 
\begin{align*}
 \int_{\B_{R}\setminus \B_{r}}|\n v|^2_{g}\sqrt{|g|} dx&= \int_{\B_{R}\setminus \B_{r}}|\n v|^2  dx+  \frac{|\kappa(x_0)|^2}{N-1} \int_{\B_{R}\setminus \B_{r}} |z|^2 \left|{\de_y v} \right|^2 dx\\
&+ \frac{|\kappa(x_0)|^2}{2(N-1)} \int_{\B_{R}\setminus  \B_{r}} |z|^2 |\n v|^2 dx
+O\left( \int_{\B_{R}\setminus \B_{r}} |x|^3 |\n v|^2 dx \right).
\end{align*}
\end{lemma}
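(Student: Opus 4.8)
The plan is to substitute the inverse-metric and volume-element expansions of Lemma \ref{MaMetricMetric} into $|\nabla v|^2_g=\sum_{a,b}g^{ab}\partial_a v\,\partial_b v$, multiply by $\sqrt{|g|}$, and then integrate term by term over $\B_R\setminus\B_r$, discarding everything that vanishes by symmetry. First I would record the consequences of the hypothesis $v(y,z)=\bar\theta(|y|,|z|)$: one has $\partial_y v=\frac{y}{|y|}\bar\theta_1$ and $\partial_{z_i}v=\frac{z_i}{|z|}\bar\theta_2$, so that $|\partial_y v|^2$, $|\nabla_z v|^2$ and $|\nabla v|^2$ are all functions of $(|y|,|z|)$ alone, while the mixed product $\partial_{z_i}v\,\partial_y v$ carries the odd factor $\frac{y}{|y|}\frac{z_i}{|z|}$. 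I write $A:=|\partial_y v|^2$ and $B:=|\nabla_z v|^2$, so $A+B=|\nabla v|^2$.

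Next I would split $|\nabla v|^2_g\sqrt{|g|}$ into the three blocks $g^{11}(\partial_y v)^2$, $\;2\sum_i g^{i1}\partial_{z_i}v\,\partial_y v$, $\;\sum_{ij}g^{ij}\partial_{z_i}v\,\partial_{z_j}v$, each multiplied by $\sqrt{|g|}$, and expand to second order in $|x|$ using Lemma \ref{MaMetricMetric}, collecting the higher-order remainder. The structural point is parity: $\B_R\setminus\B_r$ is invariant under $y\mapsto-y$ and under each reflection $z_i\mapsto -z_i$ (indeed under all rotations of the $z$-variable), and the weights $A$, $\bar\theta_2^2$ are even, so any monomial in the expansion that is odd in $y$ or in a single $z_i$ integrates to zero. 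This kills every first-order term, and in particular every contribution containing the mixed product $\partial_{z_i}v\,\partial_y v$: each such term is either odd in $y$, or else (after a factor $y$ in the metric restores even $y$-parity) is forced onto the diagonal $i=j$, where $\tau^i_i=0$ by \eqref{eq:tau-antisymm}.

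For the surviving second-order terms I would use two elementary integral identities valid for $f$ radial in $z$: namely $\int z_i^2 f=\frac{1}{N-1}\int|z|^2 f$, and the fourth-moment formula $\int z_iz_jz_lz_m\,f=\frac{1}{(N-1)(N+1)}(\delta_{ij}\delta_{lm}+\delta_{il}\delta_{jm}+\delta_{im}\delta_{jl})\int|z|^4 f$ (here $z\in\R^{N-1}$, so the dimension is $N-1$). The first identity converts the three surviving diagonal terms into multiples of $|\kappa(x_0)|^2$: one from the $\frac12\sum z_iz_j\kappa_i\kappa_j$ piece of $\sqrt{|g|}$ paired with $A+B$, one from the $-2\sum z_i\kappa_i$ piece of $g^{11}$ times the $\sum z_i\kappa_i$ piece of $\sqrt{|g|}$ (acting on $A$), and one from the $3\sum z_iz_j\kappa_i\kappa_j$ piece of $g^{11}$ (acting on $A$). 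Summing the coefficients $\frac12(A+B)+(-2+3)A$ and applying $\sum_i\kappa_i(0)^2=|\kappa(x_0)|^2$ yields exactly $\frac{|\kappa(x_0)|^2}{N-1}\int|z|^2|\partial_y v|^2+\frac{|\kappa(x_0)|^2}{2(N-1)}\int|z|^2|\nabla v|^2$. The second identity is applied to the only genuinely quartic term, the order-two piece $\sum_{ijlm}z_iz_jz_lz_m\,\tau^j_l\tau^i_m\,\bar\theta_2^2/|z|^2$ of $\sum_{ij}g^{ij}\partial_{z_i}v\,\partial_{z_j}v$; contracting the three Kronecker pairs gives $\sum_{il}(\tau^i_l)^2-\sum_{ij}(\tau^i_j)^2+\sum_{ij}\tau^j_j\tau^i_i$, which vanishes by the antisymmetry $\tau^j_i=-\tau^i_j$ and $\tau^i_i=0$.

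The main obstacle I expect is bookkeeping rather than conceptual: correctly enumerating every second-order monomial arising from the triple product of the expansions of $g^{ab}$, the gradient weight, and $\sqrt{|g|}$, and verifying each parity cancellation, together with the torsion quartic identity where the antisymmetry of $\tau$ is essential. The error term needs only that each of $(\partial_y v)^2$, $|\partial_{z_i}v\,\partial_y v|$ and $|\partial_{z_i}v\,\partial_{z_j}v|$ be controlled by $|\nabla v|^2$ (Cauchy--Schwarz), so that all omitted terms of order $|x|^3$ and higher contribute at most $O\!\left(\int_{\B_R\setminus\B_r}|x|^3|\nabla v|^2\,dx\right)$, as claimed.
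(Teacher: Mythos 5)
Your proposal is correct, and it is the natural direct computation; note that the paper itself does not prove Lemma \ref{lem:to-prove} but defers to \cite{Fall-Thiam}, where the argument is this same expansion of $g^{ab}\partial_a v\,\partial_b v\,\sqrt{|g|}$ via Lemma \ref{MaMetricMetric} followed by parity and moment identities. Your bookkeeping checks out: the coefficient $\tfrac12-2+3=\tfrac32$ on $\sum z_iz_j\kappa_i\kappa_j$ acting on $|\partial_y v|^2$ plus the coefficient $\tfrac12$ acting on $|\nabla_z v|^2$, after $\int z_iz_j f=\tfrac{\delta_{ij}}{N-1}\int|z|^2f$, recombine exactly into $\tfrac{1}{N-1}\int|z|^2|\partial_yv|^2+\tfrac{1}{2(N-1)}\int|z|^2|\nabla v|^2$. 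The only simplification worth noting is that the mixed block and the quartic torsion term vanish pointwise, not merely after integration, since $\sum_{i,j}z_iz_j\tau^i_j=0$ by \eqref{eq:tau-antisymm} (and likewise for $(\tau^i_j)'$), so the fourth-moment formula is not actually needed.
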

For the proofs of these Lemma \ref{MaMetric}, Lemma \ref{MaMetricMetric} and Lemma \ref{lem:to-prove}, we refer to the paper of the first author and Fall \cite{Fall-Thiam}. See also \cite{Th1} for a generalization.
\section{Existence Result in domains}\label{Section5}
The aim of this section is to prove the following result.
\begin{proposition}\label{th:main12}
Let $N \geq 4$, $0\leq s_2 <s_1<2$ and   $\O$  be a   bounded domain of $\R^N$. Consider  $\G$ a smooth closed curve contained in $\O$.
Let  $h$ be  a continuous function such that the linear operator $-\D+h$ is coercive. We assume that
\begin{equation}\label{eq:h-bound-main-th-1}
c^*:=\sup_{t \geq 0} \Psi(v)< \b^*.
\end{equation}
Then there exists a positive function $u \in H^1_0(\O)$ solution of the Euler-Lagrange equation
\begin{equation}\label{HHH1}
-\Delta u+hu=\l\rho^{-s_1}_\Gamma u^{2^*_{s_1}-1}+\rho^{-s_2}_\Gamma u^{2^*_{s_2}-1} \qquad \textrm{ in } \Omega.
\end{equation}
\end{proposition}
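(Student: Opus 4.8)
The plan is to produce a critical point of $\Psi$ at the mountain pass level $c^*$ by the usual minimax scheme, using the strict gap $c^*<\b^*$ to defeat the loss of compactness caused by the critical Hardy--Sobolev nonlinearities. First I would record the geometry of $\Psi$, which is Lemma \ref{c1A}: the coercivity of $-\D+h$ gives a constant $c_0>0$ with $\int_\O(|\n u|^2+hu^2)\,dx\geq c_0\|u\|_{H^1_0(\O)}^2$, and combined with the continuous embeddings $H^1_0(\O)\hookrightarrow L^{2^*_{s_i}}(\rho_\G^{-s_i}\,dx)$ this yields $\Psi(u)\geq \tfrac{c_0}{2}\|u\|^2-C\|u\|^{2^*_{s_1}}-C\|u\|^{2^*_{s_2}}$ near the origin; since $2^*_{s_2}>2^*_{s_1}>2$, $\Psi$ has a strictly positive infimum over a small sphere $\{\|u\|=r\}$, while $\Psi(tu_0)\to-\infty$ as $t\to+\infty$ for any fixed $u_0\geq0$, $u_0\neq0$. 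This is exactly the ring-of-mountains situation recalled before \eqref{Heat}.

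By the mountain pass theorem there is a sequence $(u_n)\subset H^1_0(\O)$ with $\Psi(u_n)\to c^*$ and $\Psi'(u_n)\to0$ in $H^{-1}(\O)$. Using the characterization of $c^*$ as an infimum over nonnegative directions (equivalently, testing $\Psi'(u_n)$ against the negative part $u_n^-$ and using coercivity to force $\|u_n^-\|\to0$), I may take $u_n\geq0$. To bound $(u_n)$ I would form the combination
$$
\Psi(u_n)-\frac{1}{2^*_{s_1}}\langle\Psi'(u_n),u_n\rangle
=\Big(\frac12-\frac{1}{2^*_{s_1}}\Big)\int_\O(|\n u_n|^2+hu_n^2)\,dx
+\Big(\frac{1}{2^*_{s_1}}-\frac{1}{2^*_{s_2}}\Big)\int_\O\frac{|u_n|^{2^*_{s_2}}}{\rho_\G^{s_2}}\,dx,
$$
in which the critical term with exponent $2^*_{s_1}$ cancels and both coefficients are strictly positive because $2^*_{s_2}>2^*_{s_1}>2$; since the left-hand side is $\leq c^*+o(1)+o(1)\|u_n\|$, coercivity turns this into a uniform bound on $\|u_n\|_{H^1_0(\O)}$.

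Passing to a subsequence, $u_n\weak u$ in $H^1_0(\O)$ and $u_n\to u$ a.e.\ in $\O$. The decisive step is to upgrade this to strong convergence, and this is precisely where the hypothesis $c^*<\b^*$ is used: it is the content of the compactness Proposition \ref{Prop-Compactness}. The underlying mechanism is concentration--compactness. The embeddings above are noncompact only because of possible concentration along the singular curve $\G$; after blow-up at a point of $\G$, using that $\rho_\G$ becomes the distance to a line (so $\rho_\G(F_{y_0}(y,z))=|z|$ by \eqref{eq:rho_Gamm-is-mod-z}) and the equation converges to the model problem \eqref{A1A1} on $\R^N$, any nontrivial concentration profile is a ground state of \eqref{A1A1} and therefore carries energy at least $\b^*$. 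Since the total level equals $c^*<\b^*$, no bubble can split off, the weak limit retains all the energy, and $u_n\to u$ strongly in $H^1_0(\O)$.

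Consequently $\Psi(u)=c^*$ and $\Psi'(u)=0$, so $u$ is a weak solution of \eqref{HHH1}. It is nontrivial: $c^*\geq\inf_{\|u\|=r}\Psi(u)>0$, whereas strong convergence to $u=0$ would force $\Psi(u_n)\to\Psi(0)=0$. Moreover $u\geq0$ as an $H^1_0$--limit of nonnegative functions, and writing the equation as $-\D u+\big(h-\l\rho_\G^{-s_1}u^{2^*_{s_1}-2}-\rho_\G^{-s_2}u^{2^*_{s_2}-2}\big)u=0$ with locally integrable zeroth--order coefficient (by the elliptic regularity of Proposition \ref{Regularity}), the strong maximum principle gives $u>0$ in $\O$, completing the proof. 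The main obstacle is clearly the compactness step: because the problem is at the critical Hardy--Sobolev exponents, weak limits of Palais--Smale sequences need not be critical points, and the entire thrust of the paper---the test-function construction of Section \ref{Section4}, resting on the local metric expansion of Section \ref{Section2} and the sharp decay estimates of Theorem \ref{TheoremB}---is aimed at securing the hypothesis $c^*<\b^*$ that this proposition takes for granted.
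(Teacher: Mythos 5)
Your overall architecture coincides with the paper's: mountain pass geometry from coercivity plus the superquadratic growth (Lemma \ref{c1A}), a bounded Palais--Smale sequence obtained from the combination $\Psi(u_n)-\tfrac{1}{2^*_{s_1}}\langle\Psi'(u_n),u_n\rangle$ (this is exactly \eqref{BestConstantC}), and strong convergence to a nontrivial critical point because the level lies strictly below $\b^*$ (Lemma \ref{Prop-Compactness}). The one place where you genuinely diverge is the justification of that compactness step. You argue by concentration--compactness: blow up at a point of $\G$, identify any bubble with a ground state of the model problem \eqref{A1A1}, and conclude that a nontrivial bubble would cost at least $\b^*$. The paper instead proceeds by contradiction without any blow-up: assuming the weak limit vanishes, it uses Lemma \ref{OuiOui} --- a partition of unity on a tubular neighbourhood of $\G$ together with the metric expansion of Lemma \ref{MaMetricMetric}, which converts the localized energies $\int_\O|\n u_n|^2$ and $\int_\O\rho_\G^{-s_2}|u_n|^{2^*_{s_2}}$ into their flat counterparts with $\rho_\G$ replaced by $|z|$ up to errors controlled by $\|u_n\|_{L^2}=o(1)$ --- to deduce directly that the level is at least $\b^*$. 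Your bubbling description is the more standard mechanism and would require a full Struwe-type decomposition to be rigorous; the paper's route is more elementary and avoids classifying the profiles, at the price of the technical covering argument. Two further remarks: you supply the nonnegativity and strong maximum principle argument for positivity of $u$, which the paper's proof omits entirely (its Lemma \ref{Prop-Compactness} only yields a nontrivial critical point); on the other hand, your claim that testing $\Psi'(u_n)$ against $u_n^-$ forces $\|u_n^-\|\to 0$ does not work for the functional as written with $|u|^{2^*_{s_i}}$ (one gets $\|u_n^-\|_h^2 = \l\int\rho_\G^{-s_1}|u_n^-|^{2^*_{s_1}}+\int\rho_\G^{-s_2}|u_n^-|^{2^*_{s_2}}+o(1)$, which is not small a priori); the standard fix is to truncate the nonlinearity to $(u^+)^{2^*_{s_i}-1}$ before running the minimax, a modification neither you nor the paper makes explicit.
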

The proof of Proposition \ref{th:main12} is divided into various preliminaries results. We start by the following.
\begin{lemma}\label{OuiOui}
For $N \geq 3$, we let $\O$ be an open subset of $\R^N$ and let $\G\subset \O$ be a smooth closed curve contained in $\Omega$.
Then for every $r>0$, there exists $  c_r>0$, only  depending on $\O,\G,N,\s$ and $r$, such that  for every  $u \in H^1_0(\O)$  
$$
\left(\frac{1}{2}-\frac{1}{2^*_\s}\right) \int_\O |\n u|^2 dx+\left(\frac{1}{2^*_{s_1}}-\frac{1}{2^*_{s_2}}\right) \int_\O \rho_\Gamma^{-s_2} dx+c_r \int_\O | u|^2 dy\geq c^*.
$$
where, for $0\leq s_2 <s_1<2$, $2^*_{s_1}=\frac{2(N-s_1)}{N-2}$ and $2^*_{s_2}=\frac{2(N-s_2)}{N-2}$.
\end{lemma}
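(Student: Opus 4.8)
The plan is to read the inequality through the variational characterisation $c^*=\inf\{\Psi(v): v\in\mathcal{N}\}$, where $\mathcal{N}$ is the set of (nonnegative) functions realising the maximum of $t\mapsto\Psi(tu)$ along their own ray; equivalently $\mathcal{N}=\{v\neq0:\langle\Psi'(v),v\rangle=0\}$. Since every admissible competitor $u$ in the definition of $c^*$ contributes $\max_{t\geq0}\Psi(tu)=\Psi(t_u u)$ with $v:=t_u u\in\mathcal{N}$, it suffices to establish the asserted bound for the ray-maximiser $v$, the general $u$ being reduced to it. The coercivity of $-\Delta+h$ guarantees that $\int_\O|\n u|^2+\int_\O h u^2>0$ for $u\neq0$, so that the unique maximiser $t_u\in(0,\infty)$ is well defined (this uniqueness, for $2^*_{s_2}>2^*_{s_1}>2$, is already recorded in the paper).

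First I would write down the Nehari identity. For $v\in\mathcal{N}$ one has $\langle\Psi'(v),v\rangle=\int_\O|\n v|^2+\int_\O h v^2-\lambda\int_\O\rho_\G^{-s_1}|v|^{2^*_{s_1}}-\int_\O\rho_\G^{-s_2}|v|^{2^*_{s_2}}=0$. Eliminating the term $\lambda\int_\O\rho_\G^{-s_1}|v|^{2^*_{s_1}}$ from $\Psi(v)$ by means of this relation gives the exact splitting
\[
\Psi(v)=\Big(\tfrac12-\tfrac1{2^*_{s_1}}\Big)\Big(\int_\O|\n v|^2+\int_\O h v^2\Big)+\Big(\tfrac1{2^*_{s_1}}-\tfrac1{2^*_{s_2}}\Big)\int_\O\rho_\G^{-s_2}|v|^{2^*_{s_2}},
\]
in which both coefficients are strictly positive because $2<2^*_{s_1}<2^*_{s_2}$. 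This is the algebraic heart of the lemma, and it identifies the exponent playing the role of $\sigma$ as $s_1$.

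It then remains to dispose of the indefinite potential term. Since $\O$ is bounded and $h$ is continuous, $h$ is bounded on $\O$, so $\int_\O h v^2\leq\|h\|_{L^\infty(\O)}\int_\O v^2$; multiplying by the positive factor $\tfrac12-\tfrac1{2^*_{s_1}}$ and setting $c_r:=\big(\tfrac12-\tfrac1{2^*_{s_1}}\big)\|h\|_{L^\infty(\O)}$ (any larger value is admissible, which accommodates the free parameter $r$ that is fixed later in the argument), I obtain
\[
\Psi(v)\leq\Big(\tfrac12-\tfrac1{2^*_{s_1}}\Big)\int_\O|\n v|^2+\Big(\tfrac1{2^*_{s_1}}-\tfrac1{2^*_{s_2}}\Big)\int_\O\rho_\G^{-s_2}|v|^{2^*_{s_2}}+c_r\int_\O v^2.
\]
Combining this with $c^*\leq\Psi(v)$ yields the claim, the displayed weight $\rho_\G^{-s_2}$ carrying the factor $|v|^{2^*_{s_2}}$.

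The step I expect to be the genuine point of care is the reconciliation of the verbatim ``for every $u\in H^1_0(\O)$'' with the scaling behaviour: the right-hand side is homogeneous and vanishes as $u\to0$, whereas $c^*>0$, so the inequality cannot hold on all rays at once and must be understood on $\mathcal{N}$, i.e.\ for the ray-maximiser. Making this reduction precise — verifying that $t\mapsto\Psi(tu)$ has a unique critical point, that it is the maximum, and that $\Psi(t_u u)\geq c^*$ — is the one subtlety; all of it follows from the coercivity of $-\Delta+h$ and the strict ordering $2^*_{s_2}>2^*_{s_1}>2$ already exploited above, so no new machinery is required and the computation itself is routine.
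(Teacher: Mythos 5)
Your argument has correct algebra but proves the wrong inequality, and in a way that empties the lemma of its content. The right-hand side of the lemma should be $\b^*$, not $c^*$: the paper's own proof ends with ``$\geq \b^*$'', and the lemma is invoked in \eqref{Amel2} of Lemma \ref{Prop-Compactness} precisely to force $\alpha\geq\b^*$ for a Palais--Smale sequence with trivial weak limit, which contradicts $\alpha<\b^*$ and recovers compactness. (The ``$\geq c^*$'' in the statement is a typo, as is the missing $|u|^{2^*_{s_2}}$ in the middle integral.) Your derivation --- restrict to the Nehari set, use $\langle\Psi'(v),v\rangle=0$ to eliminate the $s_1$-term, absorb $\int_\O h v^2$ into $c_r\int_\O v^2$ --- yields $\Psi(v)\geq c^*$ essentially by definition of $c^*$; substituted into the compactness argument it gives only $\alpha\geq c^*$, which contradicts nothing. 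It is the same algebraic manipulation the paper already performs in \eqref{BestConstantC}, and it cannot by itself produce the constant $\b^*$, which is defined through a \emph{different} functional $\Phi$ on $\R^N$ with weight $|z|^{-s}$ in place of $\rho_\G^{-s}$.

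The genuinely missing idea is the geometric transfer from $(\O,\rho_\G)$ to $(\R^N,|z|)$. The paper covers a tubular neighbourhood of $\G$ by Fermi charts $F_{y_i}:Q_r\to\O$, takes a partition of unity $\vp_i$ with $|\n\vp_i^{1/2^*_\s}|\leq C$, and uses $\rho_\G(F_{y_i}(x))=|z|$ together with the metric expansion $\sqrt{|g|}=1+O(r)$ of Lemma \ref{MaMetricMetric} to get $\int_\O\rho_\G^{-\s}|u|^{2^*_\s}\geq(1-cr)\int_U|z|^{-\s}|\ti u|^{2^*_\s}$ and $\int_\O|\n u|^2\geq\int_U|\n\ti u|^2-c_r\int_\O u^2$; only then is the definition of $\b^*$ invoked. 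This is why $c_r$ depends on $\O,\G,N,\s,r$ (it comes from the cutoff commutators and the distortion of the charts) and not on $h$ --- note that $h$ does not appear in the lemma at all, whereas your $c_r=\bigl(\tfrac12-\tfrac1{2^*_{s_1}}\bigr)\|h\|_{L^\infty(\O)}$ does depend on it. You are right that the inequality cannot hold for arbitrary $u\in H^1_0(\O)$ by scaling (and in fairness the paper's final appeal to ``the definition of $\b^*$'' suffers from the same normalization gap), but the correct repair is to apply the localized estimates to functions satisfying the asymptotic Nehari identity supplied by the Palais--Smale condition and still land on $\b^*$; replacing $\b^*$ by $c^*$ is not an admissible fix.
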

\begin{proof}
We let $r>0$ small. We can cover a tubular neighborhood of $\G$ by a finite number of  sets $\left(T^{y_i}_r\right)_{1\leq i \leq m}$  given by 
$$
T^{y_i}_r:=F_{y_i}\left(Q_r\right), \qquad \textrm{ with $y_i\in\G$}.
$$
We refer to  Section \ref{Section2} for the parameterization  $F_{y_i}:Q_r\to \O$. 
Let $\left(\vp_i\right)_{1\leq i \leq m}$ be  a   partition of unity subordinated to this covering such that
\begin{equation}\label{eq:Part-unity}
\sum_i^{m } \vp_i  =1 \qquad \textrm{and} \qquad |\n \vp_i ^{\frac{1}{2^*_\s}} |\leq C \qquad \textrm{ in } U:=\displaystyle \cup_{i=1}^{m} T^{y_i}_r,
\end{equation}
for some positive constant $C$.
We define
\be \label{eq:def-psi_i}
\psi_i(y):=\vp_i^{\frac{1}{2^*_\s}} (y) u(y) \qquad \textrm{ and } \qquad \ti{\psi_i}(x)=\psi_i (F_{y_i}(x)).
\ee
Then, we have
\begin{align} \label{App1}
\displaystyle\int_\O \rho^{-\s}_\G |u|^{2^*_\s} dy \geq   \int_U \rho^{-\s}_\G \left| u \right|^{2^*_\s} dy=\sum_i^{m} \int_{T^{y_i}_r} \rho^{-\s}_\G  |\psi_i|^{2^*_\s} dy.
\end{align}
By  change of variables and Lemma \ref{MaMetricMetric}, we have
\begin{align}\label{App2}
\int_{T^{y_i}_r} \rho^{-\s}_\G |\psi_i|^{2^*_\s} dy=\int_{Q_r} |z|^{-\s} |\ti{\psi}_i|^{2^*_\s} \sqrt{|g|}(x) dx\geq \left(1-c r\right) \int_{Q_r} |z|^{-\s} |\ti{\psi}_i|^{2^*_\s}  dx,
\end{align}
for some positive constant $c$.
By \eqref{App1} and \eqref{App2} and the summing over $i=1, \cdots, m$, we obtain
\begin{equation}\label{Expo1}
\int_\O \rho^{-\s}_\G |u|^{2^*_\s} dy \geq \left(1-c r\right) \sum_{i=1}^m \int_{Q_r} |z|^{-\s} |\ti{\psi}_i|^{2^*_\s}  dx=(1-cr)\int_U  |z|^{-\s} |\tilde{u}(x)|^{2^*_\s} dx,
\end{equation}
with $\tilde{u}:=u(F_{y_i}(x))$.
Next, we have
\begin{equation}\label{App3}
\int_\O |\n u|^2 dx\geq \int_U | \n u|^2 dy=\sum_i^{m} \int_{T^{y_i}_r} |\n \psi_i|^2dy.
\end{equation}
By  change of variables, Lemma \ref{MaMetricMetric}, \eqref{eq:Part-unity} and \eqref{eq:def-psi_i}, we have
\begin{align*}
\int_{T^{y_i}_r} |\n \psi_i|^2dy&=\int_{Q_r} |\n \ti{\psi}_i|^2\sqrt{|g|}(x) dx\geq \left(1-c r\right) \int_{Q_r}|\n \ti{\psi}_i|^2 dx\\\\
&\geq \left(1- c' r\right) \int_{T^{y_i}_r} |\n (\phi_i^{\frac{1}{2^*_\s}}  u)|^2 dy=\int_{T^{y_i}_r} \phi_i^{\frac{2}{2^*_\s}}|\n \tilde{u}|^2 dy-c_r \int_\O | u|^2 dy,
\end{align*}
for some positive constants $c$ and $c_r$. Therefore
\begin{equation}\label{App4}
\int_{T^{y_i}_r} |\n \psi_i|^2dy \geq \int_{T^{y_i}_r} |\n \tilde{u}|^2 dy-c_r \int_\O | u|^2 dy.
\end{equation}
Hence combining \eqref{App3} and \eqref{App4}, we obtain
\begin{equation}\label{Expo2}
\int_\O |\n u|^2 dx \geq \int_{U} |\n \tilde{u}|^2 dy-c_r \int_\O | u|^2 dy.
\end{equation}
Thanks to \eqref{Expo1},\eqref{Expo2} and the definition of $\b^*$, we get
\begin{align*}
\left(\frac{1}{2}-\frac{1}{2^*_\s}\right) \int_\O |\n u|^2 dx&+\left(\frac{1}{2^*_{s_1}}-\frac{1}{2^*_{s_2}}\right) \int_\O \rho_\Gamma^{-s_2} dx+c_r \int_\O | u|^2 dy \\\\
&\geq  \left(\frac{1}{2}-\frac{1}{2^*_\s}\right) \int_{U} |\n \tilde{u}|^2 dy+\left(\frac{1}{2^*_{s_1}}-\frac{1}{2^*_{s_2}}\right) (1-cr)\int_U |\tilde{u}(x)|^{2^*_\s} dx\geq \b^*.
\end{align*}
This then ends the proof.
\end{proof}
\begin{lemma}\label{Prop-Compactness}
Let $\alpha<\b^*$ and $(u_n)_n \subset H^1_0(\O)$ be a Palais-Smale sequence for $\Psi$ at level $\alpha$. Then, up to a subsequence, there exists $u\in H^1_0(\O)$ such that
$$
\begin{cases}
u_n \to u  \qquad \textrm{ strongly in $H^1_0(\O)$}\\\\
\Psi(u)=\alpha\\\\
\Psi^\prime(u)=0.
\end{cases}
$$
\end{lemma}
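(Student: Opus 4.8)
The plan is to verify that $\Psi$ satisfies the Palais--Smale condition at every level $\alpha<\beta^*$, along the classical three-step scheme: boundedness of the sequence, identification of the weak limit as a critical point, and a concentration argument ruling out loss of energy. Throughout I write $A(u)=\int_\O|\n u|^2\,dx+\int_\O hu^2\,dx$, $B_1(u)=\int_\O\rho_\G^{-s_1}|u|^{2^*_{s_1}}\,dx$ and $B_2(u)=\int_\O\rho_\G^{-s_2}|u|^{2^*_{s_2}}\,dx$, so that $\Psi=\tfrac12A-\tfrac{\l}{2^*_{s_1}}B_1-\tfrac1{2^*_{s_2}}B_2$. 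For boundedness the key is the combination $\Psi(u_n)-\tfrac1{2^*_{s_1}}\langle\Psi'(u_n),u_n\rangle=\big(\tfrac12-\tfrac1{2^*_{s_1}}\big)A(u_n)+\big(\tfrac1{2^*_{s_1}}-\tfrac1{2^*_{s_2}}\big)B_2(u_n)$, in which the two $B_1$-contributions, and hence the parameter $\l$, cancel exactly. Its left-hand side equals $\alpha+o(1)+o(\|u_n\|_{H^1_0(\O)})$; since $2<2^*_{s_1}<2^*_{s_2}$ both coefficients on the right are positive, $B_2\geq0$, and the coercivity of $-\D+h$ gives $A(u_n)\geq C_0\|u_n\|_{H^1_0(\O)}^2$. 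This bounds $(u_n)$ in $H^1_0(\O)$, so up to a subsequence $u_n\rightharpoonup u$ weakly, $u_n\to u$ in $L^2(\O)$ and a.e.

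Next I would identify $u$ as a critical point and split the sequence. Passing to the limit in $\langle\Psi'(u_n),\varphi\rangle\to0$ (the quadratic term by weak convergence, the $L^2$ term by Rellich, and the weighted nonlinear terms by a.e.\ convergence together with the uniform bound just obtained) yields $\Psi'(u)=0$. Setting $v_n:=u_n-u\rightharpoonup0$, the Brezis--Lieb lemma gives $B_i(u_n)=B_i(u)+B_i(v_n)+o(1)$ and $\int_\O|\n u_n|^2\,dx=\int_\O|\n u|^2\,dx+\int_\O|\n v_n|^2\,dx+o(1)$, while $\int_\O hu_n^2\to\int_\O hu^2$. Combining $\langle\Psi'(u_n),u_n\rangle=o(1)$ with $\langle\Psi'(u),u\rangle=0$ produces the balance $\int_\O|\n v_n|^2\,dx=\l B_1(v_n)+B_2(v_n)+o(1)$, and the energy decomposes as $\alpha=\Psi(u)+E_\infty+o(1)$, where $E_\infty=\tfrac12\int_\O|\n v_n|^2\,dx-\tfrac{\l}{2^*_{s_1}}B_1(v_n)-\tfrac1{2^*_{s_2}}B_2(v_n)+o(1)$.

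Finally I would rule out concentration. Using the balance to eliminate $B_1(v_n)$ rewrites $E_\infty=\big(\tfrac12-\tfrac1{2^*_{s_1}}\big)\ell+\big(\tfrac1{2^*_{s_1}}-\tfrac1{2^*_{s_2}}\big)b_2$, where $\ell:=\lim\int_\O|\n v_n|^2\,dx$ and $b_2:=\lim B_2(v_n)$ along a subsequence; both coefficients are positive, so $E_\infty\geq0$, and the identical computation applied to $\langle\Psi'(u),u\rangle=0$ shows $\Psi(u)\geq0$, whence $\alpha\geq E_\infty$. Suppose $\ell>0$. Since the only source of non-compactness is the concentration of the weighted integrals along $\Gamma$, I would apply Lemma \ref{OuiOui} to $v_n$ and let $\|v_n\|_{L^2(\O)}\to0$ kill the lower-order term, obtaining $E_\infty\geq\beta^*$, which contradicts $\alpha<\beta^*$. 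Hence $\ell=0$, so $\int_\O|\n v_n|^2\,dx\to0$; together with $v_n\to0$ in $L^2(\O)$ this gives $u_n\to u$ strongly in $H^1_0(\O)$, and strong convergence finally yields $\Psi(u)=\alpha$ and $\Psi'(u)=0$. The delicate point is exactly this last step: transferring the lower bound $\beta^*$ of the flat model problem to the concentrating sequence on the curved domain. This is where the geometry enters, and it is handled by Lemma \ref{OuiOui}, whose localization through the maps $F_{y_i}$ and the metric estimates of Section \ref{Section2} absorbs the curvature of $\G$ into errors of order $r$ while $\|v_n\|_{L^2(\O)}\to0$ removes the compact $L^2$ contribution.
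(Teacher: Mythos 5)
Your argument is correct and rests on the same two pillars as the paper's proof: the combination $\Psi(u_n)-\tfrac{1}{2^*_{s_1}}\langle\Psi'(u_n),u_n\rangle$, which cancels the $\l$-term and, via coercivity of $-\D+h$, bounds the sequence; and Lemma \ref{OuiOui}, which converts the surviving quantity into a comparison with $\b^*$. Where you diverge is the endgame. The paper argues by contradiction on the weak limit: it assumes $u=0$, applies Lemma \ref{OuiOui} directly to $u_n$ (whose $L^2$-norm then tends to $0$, killing the $c_r\int_\O u_n^2$ term) to force $\alpha\geq\b^*$, concludes $u\neq 0$, and from there asserts strong convergence. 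You instead carry out the full Brezis--Lieb splitting $u_n=u+v_n$, verify $\Psi'(u)=0$ and $\Psi(u)\geq 0$, and apply Lemma \ref{OuiOui} to $v_n$: since $v_n\to 0$ in $L^2(\O)$, a nonvanishing gradient limit $\ell>0$ would give $E_\infty\geq\b^*>\alpha$, so $\ell=0$ and strong convergence genuinely follows. Your route is the more complete one --- ruling out $u=0$ alone does not yield $u_n\to u$ in $H^1_0(\O)$, whereas your splitting does --- at the modest extra cost of the Brezis--Lieb step and the limit identification $\Psi'(u)=0$. Both uses of Lemma \ref{OuiOui} lean on the same implicit point: as its proof (which invokes the definition of $\b^*$) shows, the lemma really requires the input function to satisfy the Nehari-type balance between $\int_\O|\n v|^2$ and the weighted terms; your $v_n$ satisfies this asymptotically by the identity you extract from $\langle\Psi'(u_n),u_n\rangle=o(1)$ and $\langle\Psi'(u),u\rangle=0$, so your application is legitimate in exactly the same sense as the paper's application to $u_n$ under the hypothesis $u=0$.
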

\begin{proof}
Let $\alpha<\b^*$ and $(u_n)_n \subset H^1_0(\O)$ be a Palais-Smale sequence for $\Psi$ at level $\alpha$. That is
\begin{equation}\label{Torodo1}
\alpha=\frac{1}{2} \int_\O |\n u_n|^2 dx+ \frac{1}{2} \int_\O h u_n^2 dx-\frac{\l}{2^*_{s_1}} \int_\O \rho^{-s_1}_\Gamma |u_n|^{2^*_{s_1}} dx-\frac{1}{2^*_{s_2}} \int_\O \rho^{-s_2}_\Gamma |u_n|^{2^*_{s_2}} dx+o(1)
\end{equation}
and
\begin{equation}\label{Torodo2}
\int_\O \n u_n \n \varphi dx+\int_\O h u_n \varphi dx-\l \int_\O \rho^{-s_1}_\Gamma |u_n|^{2^*_{s_1}-2} u_n \varphi dx-\frac{1}{2^*_{s_2}} \int_\O \rho^{-s_2}_\Gamma |u_n|^{2^*_{s_2}-2}  u_n \varphi dx+o(1),
\end{equation}
for all $\varphi \in H^1_0(\O)$ as $n \to \infty$. Combining \eqref{Torodo1} and \eqref{Torodo2}, we obtain
\begin{equation}\label{BestConstantC}
\alpha=\left(\frac{1}{2}-\frac{1}{2^*_{s_1}}\right) \int_\O |\n u_n|^2 dx+ \left(\frac{1}{2}-\frac{1}{2^*_{s_1}}\right) \int_\O h u_n^2 dx+\left(\frac{1}{2^*_{s_1}}-\frac{1}{2^*_{s_2}} \right)\int_\O \rho^{-s_2}_\Gamma |u_n|^{2^*_{s_2}} dx+o(1).
\end{equation}
Now we use the fact that $\frac{1}{2^*_{s_1}}-\frac{1}{2^*_{s_2}}$ and $\frac{1}{2}-\frac{1}{2^*_{s_2}}$ are positive and the coercivity of the linear operator $-\D+h$, we obtain
$$
\frac{\alpha}{\left(\frac{1}{2}-\frac{1}{2^*_{s_1}}\right)}+o(1) \geq \int_\O |\n u_n|^2 dx+\int_\O h u_n^2 dx \geq  \|u_n\|_{H^1(\O)}.
$$
Consequently, up to a subsequence, there exists $u \in H^1_0(\Omega)$ such that
$
u_n
$
converges weakly to $u$ in $H^1_0(\O)$ and strongly to $L^p(\O)$ for all $2 \leq p <2^*_0$. We assume by contradiction that $u=0$. Therefore, by \eqref{BestConstantC}, we obatin
\begin{equation}\label{Amel1}
\alpha=\left(\frac{1}{2}-\frac{1}{2^*_{s_1}}\right) \int_\O |\n u_n|^2 dx+\left(\frac{1}{2^*_{s_1}}-\frac{1}{2^*_{s_2}} \right)\int_\O \rho^{-s_2}_\Gamma |u_n|^{2^*_{s_2}} dx+o(1).
\end{equation}
Moreover by Lemma \ref{OuiOui}, we get
\begin{equation}\label{Amel2}
\left(\frac{1}{2}-\frac{1}{2^*_{s_1}}\right) \int_\O |\n u_n|^2 dx+\left(\frac{1}{2^*_{s_1}}-\frac{1}{2^*_{s_2}} \right)\int_\O \rho^{-s_2}_\Gamma |u_n|^{2^*_{s_2}} dx+o(1) \geq \b^*.
\end{equation}
Hence by \eqref{Amel1} and \eqref{Amel2}, we obtain
$$
\alpha \geq \b^*,
$$
which contradicts the fact that $\alpha< \b^*$. Then $u \neq 0$ and 
$$
u_n  \to u \qquad \textrm{in $H^1_0(\O)$}.
$$
This then ends the proof.
\end{proof}
Next, we will need the following so-called mountain pass lemma due to Ambrosetti and Robinowitz, see \cite{MPL}. Then we have
\begin{lemma}\label{MPLE}\textbf{(Mountain Pass Lemma)}\\
Let $(X, \|\cdot\|_X)$ be a Banach space and $\Psi: X \to \R$ a functional of class $\calC^1$. Wa assume that
\begin{enumerate}
\item[1.] $\Psi(0)=0$;
\item[2.] There exist positive constants $A, B$ such that if $\|u\|_X=A$, then $\Psi(u)\geq B$;
\item[3.] There exists $u_0 \in X$ such that $\|u_0\| \geq A$ and $ \Psi(u_0)<B.
$
\end{enumerate}
Define
$$
\mathcal{P}= \lbrace 
\gamma \in \calC^0([0, 1]; X) \textrm{ such that } \gamma(0)=0 \textrm{ and } \gamma(1)= u_0
\rbrace.
$$
Then
$$
\beta:=\inf_{\gamma \in \mathcal{P}} \sup_{t \in [0, 1]} \Psi (\gamma(t)),
$$
is a critical value.
\end{lemma}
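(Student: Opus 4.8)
The plan is to derive the existence of a critical point at the level $\beta$ from the classical deformation lemma, under the standing Palais--Smale hypothesis (the compactness that, in the concrete application of this paper, is supplied by Lemma \ref{Prop-Compactness}). First I would record the elementary but essential geometric fact that $\beta$ is a genuine barrier level. Every $\gamma\in\mathcal{P}$ joins $\gamma(0)=0$, where $\|\gamma(0)\|_X=0<A$, to $\gamma(1)=u_0$, where $\|u_0\|_X\geq A$; by continuity of $t\mapsto\|\gamma(t)\|_X$ there is some $t_*\in[0,1]$ with $\|\gamma(t_*)\|_X=A$, and hypothesis (2) then gives $\Psi(\gamma(t_*))\geq B$. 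Taking the supremum over $t$ and the infimum over $\gamma$ yields
$$
\beta\geq B>\max\{\Psi(0),\Psi(u_0)\},
$$
where we used $B>0=\Psi(0)$ and $\Psi(u_0)<B$ from hypotheses (1) and (3). In particular both endpoints sit strictly below the level $\beta$.

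Next I would argue by contradiction. Suppose $\beta$ is not a critical value, so that $K_\beta:=\{u\in X:\Psi(u)=\beta,\ \Psi'(u)=0\}$ is empty. Invoking the Palais--Smale condition at level $\beta$ together with the quantitative deformation lemma, there exist $\epsilon>0$ with $2\epsilon<\beta-\max\{\Psi(0),\Psi(u_0)\}$ and a continuous map $\eta:[0,1]\times X\to X$ such that $\eta(0,\cdot)=\id$, such that $\eta(1,\cdot)$ carries the sublevel set $\{\Psi\leq\beta+\epsilon\}$ into $\{\Psi\leq\beta-\epsilon\}$, and such that $\eta(s,u)=u$ whenever $|\Psi(u)-\beta|\geq 2\epsilon$. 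By the choice of $\epsilon$ the endpoints $0$ and $u_0$ are left fixed by $\eta(1,\cdot)$. Now choose, by definition of the infimum $\beta$, a path $\gamma\in\mathcal{P}$ with $\sup_{t\in[0,1]}\Psi(\gamma(t))\leq\beta+\epsilon$, and set $\tilde\gamma:=\eta(1,\gamma(\cdot))$. Since the endpoints are fixed, $\tilde\gamma(0)=0$ and $\tilde\gamma(1)=u_0$, so $\tilde\gamma\in\mathcal{P}$; yet the sublevel property gives $\sup_t\Psi(\tilde\gamma(t))\leq\beta-\epsilon<\beta$, contradicting the definition of $\beta$. Hence $K_\beta\neq\emptyset$ and $\beta$ is a critical value.

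The main obstacle is the construction of the deformation $\eta$, which is where the argument genuinely uses analysis rather than topology. In a Banach space $\Psi'$ need not be locally Lipschitz, so one cannot flow along $-\Psi'$ directly; instead I would build a locally Lipschitz pseudo-gradient field $V$ for $\Psi$ on the set of regular points, cut it off to zero outside the critical band $\{|\Psi-\beta|<2\epsilon\}$ with a Lipschitz partition of unity, and integrate the ODE $\dot\eta=-\chi(\eta)\,V(\eta)/\|V(\eta)\|^2$ to obtain the flow. The Palais--Smale condition is precisely what guarantees a uniform lower bound $\|\Psi'(u)\|\geq\delta>0$ on the band $\{|\Psi-\beta|\leq 2\epsilon\}$ for $\epsilon$ small (otherwise a sequence along which $\Psi\to\beta$ and $\Psi'\to 0$ would, by compactness, produce a critical point in $K_\beta$, contradicting $K_\beta=\emptyset$); this lower bound forces the flow to decrease $\Psi$ at a definite rate and drives the sublevel set down by $2\epsilon$ in unit time, completing the deformation lemma on which the whole argument rests.
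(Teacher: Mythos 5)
The paper gives no proof of this lemma at all --- it is quoted as a known result of Ambrosetti--Rabinowitz \cite{MPL} --- so there is no internal argument to compare yours against; your deformation-lemma proof is the standard one and is essentially sound. The one substantive point is the Palais--Smale condition, which you correctly identify as indispensable but which is absent from the statement as written: without some compactness hypothesis the conclusion is simply false (hypotheses (1)--(3) hold for $f(x,y)=x^2+(1-x)^3y^2$ on $\R^2$, whose only critical point is the origin), so the lemma as stated cannot be proved. You repair this by taking (PS) as a standing hypothesis, but note that this is not quite what the paper has at its disposal: Lemma \ref{Prop-Compactness} only gives compactness of Palais--Smale sequences at levels $\alpha<\beta^*$, not at an arbitrary mountain-pass level $\beta$. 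What the paper actually uses --- and what Lemma \ref{c1A} asserts --- is the weaker, compactness-free form of the theorem: the mountain-pass geometry alone produces a Palais--Smale \emph{sequence} at level $\beta$. That version follows from your own quantitative deformation argument with a small twist: instead of deriving the contradiction from $K_\beta=\emptyset$, one assumes there are no almost-critical points in the band $\{\,|\Psi-\beta|\le 2\epsilon\,\}$ (i.e.\ $\|\Psi'\|\ge\delta$ there), builds the flow exactly as you describe, and contradicts the definition of $\beta$; no compactness enters. The upgrade from a PS sequence to a genuine critical point is then performed separately by Lemma \ref{Prop-Compactness} under the condition $c_0<\beta^*$. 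So to serve the paper you should either add (PS) explicitly to the hypotheses, or --- better, since it is the form actually applied --- prove the PS-sequence version; your argument adapts to it with only the modification just indicated.
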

\begin{lemma}\label{c1A}
Let $\O$ be a bounded domain of $\R^N$, $\Gamma$ be a closed curve included in $\O$ and $h$ be a continuous function such that the linear operator is $-\D+h$ is coercive. Let $u_0 \in H^1_0(\O) \setminus \lbrace 0 \rbrace$. Then there exists $c_0$ a positive constant depending on $u_0$ and $(u_n)_n \subset H^1_0(\O)$ a a Palais-Smale sequence for $\Psi$ at level $c_0$. Moreover
$$
c_0 \leq \sup_{t \geq 0} \Psi(t u_0).
$$  
\end{lemma}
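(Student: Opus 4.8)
The plan is to verify that $\Psi$ satisfies the three geometric hypotheses of the Mountain Pass Lemma (Lemma \ref{MPLE}) and then to extract a Palais--Smale sequence at the associated min-max level, identifying that level with $c_0$. The condition $\Psi(0)=0$ is immediate from \eqref{Functional}. For the ring of mountains I would use the coercivity of $-\D+h$, namely that there is $c>0$ with $\int_\O |\n u|^2\,dx+\int_\O h u^2\,dx \ge c\|u\|^2_{H^1_0(\O)}$, together with the Hardy--Sobolev inequality relative to the curve singularity, which gives $\int_\O \rho_\G^{-s_i}|u|^{2^*_{s_i}}\,dx \le C\|u\|^{2^*_{s_i}}_{H^1_0(\O)}$. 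These combine to yield
$$
\Psi(u) \ge \tfrac{c}{2}\|u\|^2_{H^1_0(\O)} - C\big(\|u\|^{2^*_{s_1}}_{H^1_0(\O)}+\|u\|^{2^*_{s_2}}_{H^1_0(\O)}\big).
$$
Since $2^*_{s_2}>2^*_{s_1}>2$, the two critical terms are of higher order, so for some $A>0$ small there is $B>0$ with $\Psi(u)\ge B$ whenever $\|u\|_{H^1_0(\O)}=A$, regardless of the sign of $\l$.

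Next I would produce the far-away point from the prescribed $u_0$. Along the ray $t\mapsto tu_0$ the dominant contribution for large $t$ is $-\frac{1}{2^*_{s_2}}t^{2^*_{s_2}}\int_\O \rho_\G^{-s_2}|u_0|^{2^*_{s_2}}\,dx$, which carries the largest exponent and a negative sign; hence $\Psi(tu_0)\to-\infty$ as $t\to+\infty$. I would therefore fix $T>0$ large enough that $u_1:=Tu_0$ satisfies $\|u_1\|_{H^1_0(\O)}\ge A$ and $\Psi(u_1)<B$, verifying the third hypothesis. Setting $c_0:=\inf_{\g\in\mathcal{P}}\sup_{s\in[0,1]}\Psi(\g(s))$, where $\mathcal{P}$ is the class of paths in $H^1_0(\O)$ joining $0$ to $u_1$, the geometry forces every path to cross the sphere $\{\|u\|_{H^1_0(\O)}=A\}$, so $c_0\ge B>0$; note $c_0$ depends on $u_0$ through $u_1$. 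The quantitative deformation lemma underlying Lemma \ref{MPLE} then furnishes a Palais--Smale sequence $(u_n)_n$ for $\Psi$ at level $c_0$. For the upper bound I would test $c_0$ against the straight segment $\g_0(s)=sTu_0=su_1$: since $\{sT:s\in[0,1]\}\subset[0,\infty)$,
$$
c_0\le \sup_{s\in[0,1]}\Psi(sTu_0)\le \sup_{t\ge 0}\Psi(tu_0),
$$
which is exactly the claimed bound.

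The delicate point I expect is the extraction step. Lemma \ref{MPLE} is phrased as producing a \emph{critical value} under the Palais--Smale condition, but here $\Psi$ fails $(PS)$ because of the non-compact weighted embeddings $H^1_0(\O)\hookrightarrow L^{2^*_{s_i}}(\rho_\G^{-s_i}\,dx)$. What the deformation argument provides unconditionally, however, is only the existence of a Palais--Smale sequence at the min-max level, and this is precisely the content of Lemma \ref{c1A}; the passage from such a sequence to a genuine critical point is deferred to the compactness analysis of Lemma \ref{Prop-Compactness}, which applies only once the strict inequality $c_0<\b^*$ has been secured by the test-function construction. A minor additional care is to ensure that the min-max level coincides with the quantity $c^*$ introduced earlier and that $u_0$ may be taken non-negative, both of which follow from the uniqueness of the maximum of $t\mapsto\Psi(tu_0)$ on $[0,\infty)$ noted in the introduction.
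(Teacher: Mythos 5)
Your proof follows the same route as the paper: verify the three mountain-pass hypotheses (using the coercivity of $-\Delta+h$ and the Hardy--Sobolev inequality for the ring of mountains, and the dominance of the negative $t^{2^*_{s_2}}$ term to get $\Psi(tu_0)\to-\infty$), then invoke Lemma \ref{MPLE} and bound $c_0$ by the straight segment $t\mapsto tu_0$. You are in fact more careful than the paper on one point: Lemma \ref{MPLE} as stated produces a \emph{critical value}, which would require the Palais--Smale condition that fails here, and your remark that the underlying deformation argument unconditionally yields only a Palais--Smale sequence at the min-max level --- which is all that Lemma \ref{c1A} claims and all that Lemma \ref{Prop-Compactness} needs --- is exactly the correct reading.
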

\begin{proof}
We let $ t \in \R$. Recall that for all $u \in H^1_0(\O)$, we have
$$
\Psi(t u):= \frac{t^2}{2} \int_\O (|\n u|^2+ h u^2) dx -\l\frac{|t|^{2^*_{s_1}}}{2^*_{s_1}} \int_\O \rho_\G^{-s_1} |u|^{2^*_{s_1}} dx-\frac{|t|^{2^*_{s_2}}}{2^*_{s_2}} \int_\O \rho_\G^{-s_2} |u|^{2^*_{s_2}} dx
$$
Then $\Psi \in \calC^1(H_0^1(\O), \R)$. Since $0<s_2<s_1<2$ and the fact that the function $s \longmapsto 2^*_s:=\frac{2(N-s)}{N-2}$ is decreasing, we have
\begin{equation}\label{Star1}
\lim_{t \to \infty} \Psi(t u)= -\infty.
\end{equation}
Moreover, using the fact that $2^*_{s_1}, 2^*_{s_2}>2$, then there exists sufficiently positive numbers $A, B$ such that
$$
\inf_{\|u\|=A} \Psi(u) \geq B.
$$
Therefore by the Mountain pass Lemma \ref{MPLE}, we get the desired result.
\end{proof}

\begin{proof}\textbf{of Proposition \ref{th:main12}}.\\
Let $u_0 \in H^1_0(\O)$  be a non-negative, non-vanishing function such that
$$
\sup_{t \geq 0} \Psi(t u_0)<\b^*.
$$ 
Then by Lemma \ref{c1A}, there exists $c_0>0$ depending on $u_0$ and a Palais-Smale sequence $(u_n)_n \subset H^1_0(\O)$ for $\Psi$ at level $c_0$ such that
$$
c_0 \leq \sup_{t \geq 0} \Psi(t u_0)< \b^*.
$$
By Lemma \ref{Prop-Compactness},  there exists $u \in \H^1_0(\O) \setminus \lbrace 0\rbrace$ such that, up to a subsequence, 
$$
u_n \longmapsto u \quad \textrm{strongly  in $H^1_0(\Omega)$ as $n \to \infty$ and } \Psi^\prime(u)=0. 
$$
The last equality corresponds exactly to the Euler-Lagrange equation \eqref{HHH1}. This then ends the proof.
\end{proof}
\section{Existence of solution in domains: Proof of Theorem \ref{th:main1}}\label{Section4}
Next, we let $w \in \calD^{1,2}(\R^N)$ be a positive  ground state solution of
\begin{equation}\label{BB}
-\Delta w=\l |z|^{-s_1} w^{2^*_{s_1}-1} + |z|^{-s_2} w^{2^*_{s_2}-1} \qquad \textrm{ in } \R^N
\end{equation}
and
$$
\b^*=\frac{1}{2} \int_{\R^N} |\n w|^2 dx-\frac{\l}{2^*_{s_1}} \int_{\R^N} |z|^{-s_1} |w|^{2^*_{s_1}} dx-\frac{1}{2^*_{s_2}} \int_{\R^N} |z|^{-s_2} |w|^{2^*_{s_2}} dx.
$$
In what follows, we define
$$
A_{N,s_1, s_2}:=\frac{\displaystyle\int_{\R^N} |z|^2 |\de_y w|^2 dx+\int_{\R^N} |z|^2 |\n w|^2 dx-\frac{\l}{2^*_{s_1}}\int_{\R^N} |z|^{2-s_1} |w|^{2^*_{s_1}} dx-\frac{1}{2^*_{s_2}}\int_{\R^N} |z|^{2-s_2} |w|^{2^*_{s_2}} dx}{\displaystyle 2(N-1)\int_{\R^N} w^2 dx},
$$
for $N \geq 5$ and
$
A_{4, s_1, s_2}:=3/2.
$
Then we have the following result. 
\begin{proposition}\label{Exit1}
For $N\geq 4$, we let $\Omega$ be a bounded domain of $\R^N$. We assume that
\begin{equation}\label{ExistenceAssumption}
A^N_{s_1, s_2}|\kappa(y_0)|^2+h(y_0)<0,
\end{equation}
for some positive constant. 
Then there exists $u\in H^1_0(\Omega)\setminus \lbrace 0\rbrace$ such that
$$
c^*:=\max_{t\geq 0} \Psi(t u)< \b^*.
$$
\end{proposition}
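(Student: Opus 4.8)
The plan is to exhibit a single non-negative test function whose mountain-pass level lies strictly below $\b^*$. Since $t\mapsto\Psi(tu)$ has a unique maximum and $c^*=\inf_{u\ge 0,\,u\neq 0}\max_{t\ge 0}\Psi(tu)$, it is enough to produce one $u_\eps\in H^1_0(\O)\setminus\{0\}$ with $\max_{t\ge 0}\Psi(tu_\eps)<\b^*$. I would build $u_\eps$ by concentrating the ground state $w$ of Theorem \ref{TheoremA} at the point $y_0\in\G$ satisfying \eqref{ExistenceAssumption}. Using the coordinates $F_{y_0}:Q_r\to\O$ of Section \ref{Section2} and a cut-off $\eta\in C_c^\infty(Q_r)$ with $\eta\equiv 1$ near $0$, set
\be
u_\eps\big(F_{y_0}(y,z)\big):=\eps^{-\frac{N-2}{2}}\,\eta(y,z)\,w\!\left(\tfrac{y}{\eps},\tfrac{z}{\eps}\right),
\ee
and extend by $0$ outside $T^{y_0}_r$; then $u_\eps\ge 0$. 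The map $w\mapsto\eps^{-(N-2)/2}w(\cdot/\eps)$ is exactly the scale invariance of \eqref{BB} (both weighted critical terms scale identically), so to leading order the energies of $u_\eps$ reproduce those of $w$, and $\max_{t\ge0}\Psi(tu_\eps)\to\b^*$ as $\eps\to0$.

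The heart of the argument is a sharp expansion, as $\eps\to0$, of the three quantities
\be
a_\eps:=\int_\O\big(|\n u_\eps|^2+h\,u_\eps^2\big)\,dx,\qquad b^{(i)}_\eps:=\int_\O\rho_\G^{-s_i}|u_\eps|^{2^*_{s_i}}\,dx\quad(i=1,2).
\ee
Passing to $(y,z)$ and using $\rho_\G(F_{y_0}(y,z))=|z|$ from \eqref{eq:rho_Gamm-is-mod-z}, I would insert the volume expansion \eqref{DeterminantMetric} into the $b^{(i)}_\eps$ and invoke Lemma \ref{lem:to-prove} for the Dirichlet part of $a_\eps$. Because $w$ depends only on $y$ and $|z|$, every term odd in $z$ (those carrying a single factor $z_i\k_i(0)$ or $z_j\tau_j^i(0)$) integrates to zero, so the first nonvanishing geometric correction is quadratic in $z$, hence of order $\eps^2$ after rescaling. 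Concretely, Lemma \ref{lem:to-prove} contributes the curvature terms $\int|z|^2|\de_y w|^2$ and $\int|z|^2|\n w|^2$, the factor $\sqrt{|g|}$ contributes $\int|z|^{2-s_i}|w|^{2^*_{s_i}}$ to $b^{(i)}_\eps$, and the potential contributes $\int_\O h\,u_\eps^2=\eps^2 h(y_0)\int_{\R^N}w^2+o(\eps^2)$ — precisely the integrals forming the numerator of $A^N_{s_1,s_2}$ together with \eqref{ExistenceAssumption}. The decay bounds of Theorem \ref{TheoremB} are used throughout to guarantee convergence of these integrals and to show that the cut-off $\eta$ produces only $o(\eps^2)$ errors.

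It remains to maximize in $t$. Since $w$ solves \eqref{BB}, it satisfies the Nehari and Pohozaev (scaling) identities relating $\int|\n w|^2$ to $\int|z|^{-s_i}|w|^{2^*_{s_i}}$, as well as the weighted identity obtained by testing \eqref{BB} against $|z|^2 w$; these reduce the maximization to first order in the $O(\eps^2)$ corrections. The contribution of $a_\eps$ enters with a positive sign and that of the $b^{(i)}_\eps$ with the opposite sign, producing exactly the $-\tfrac{\l}{2^*_{s_1}}$ and $-\tfrac{1}{2^*_{s_2}}$ weighted terms; after collecting everything one gets
\be
\max_{t\ge0}\Psi(tu_\eps)=\b^*+C\,\eps^2\big(A^N_{s_1,s_2}|\k(y_0)|^2+h(y_0)\big)+o(\eps^2)
\ee
for some constant $C>0$ when $N\ge5$. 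Under \eqref{ExistenceAssumption} the bracket is negative, so $\max_{t\ge0}\Psi(tu_\eps)<\b^*$ for $\eps$ small, which proves the proposition in this range.

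The main obstacle is the borderline dimension $N=4$. There the integrals $\int_{\R^N}w^2$ and $\int|z|^2|\n w|^2$ diverge — the decay $w\sim|x|^{-(N-2)}$ of Theorem \ref{TheoremB} makes both logarithmically divergent exactly at $N=4$ — so the $\eps^2$ expansion must be replaced by one of order $\eps^2\log(1/\eps)$. The delicate point is to compute the coefficients of the $\eps^2\log(1/\eps)$ terms coming from $\int_\O h\,u_\eps^2$ and from the curvature corrections over the truncated region $|x|\lesssim r/\eps$, and to check that their ratio yields precisely $A_{4,s_1,s_2}=3/2$; this is what forces the separate definition of $A^N$ in this dimension. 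A secondary technical point, again handled by the decay estimates of Theorem \ref{TheoremB}, is controlling the interaction between the cut-off and the slow decay of $w$ so that all remainders are genuinely negligible against the leading correction.
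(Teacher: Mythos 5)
Your proposal follows essentially the same route as the paper: the same concentrated test function $u_\eps$ built from $w$ in the Fermi coordinates $F_{y_0}$, the same use of Lemma \ref{lem:to-prove} and the volume expansion \eqref{DeterminantMetric} to extract the $\eps^2$ curvature and potential corrections, the same reliance on the decay estimates of Theorem \ref{TheoremB} to control the cut-off and tail errors, and the same $\eps^2\log(1/\eps)$ analysis (via the weighted identity obtained by testing \eqref{BB} against $|z|^2\eta_\eps w$) in the borderline dimension $N=4$. The structure and all key ingredients match the paper's proof of Proposition \ref{Exit1} via Lemmas \ref{Lem1} and \ref{Lem2}.
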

Let $\O$ a bounded domain of $\R^N$ and $\G\subset \O$ be a smooth closed curve. We let 
$\eta \in \calC^\infty_c\left(F_{y_0}\left({Q}_{2r}\right)\right)$ be such that
$$
0\leq \eta \leq 1 \qquad \textrm{ and }\qquad \eta \equiv 1 \quad \textrm{in }  \B_r .
$$
For $\e>0$, we consider the test function $u_\e: \O \to  \R$ given  by
\begin{equation}\label{eq:TestFunction-w}
u_\e(y):=\e^{\frac{2-N}{2}} \eta(F^{-1}_{y_0}(y)) w \left(\e^{-1} {F^{-1}_{y_0}(y)}  \right).
\end{equation}
In particular,  for every $x=(t,z)\in \R\times \R^{N-1}$, we have 
\begin{equation}\label{eq:TestFunction-th}
u_\e\left(F_{y_0}(x)\right):=\e^{\frac{2-N}{2}}\eta\left(x\right)\th \left( {|t|}/{\e}, {|z|}/{\e} \right).
\end{equation}
It is clear that $u_\e \in H^1_0(\O).$ Moreover, for $t\geq 0$, we have
\begin{equation}\label{PerturbedFunctional}
\Psi(t u_\e)=\frac{t^2}{2}\int_{\O} |\n u_\e|^2 +h(x) u_\e^2 dx-\l \frac{t^{2^*_{s_1}}}{2^*_{s_1}} \int_{\O} \rho_\G^{-s_1} |u_\e|^{2^*_{s_1}} dx-\frac{t^{2^*_{s_2}}}{2^*_{s_2}} \int_{\O} \rho_\G^{-s_2} |u_\e|^{2^*_{s_2}} dx.
\end{equation}
To simplify the notations, we will write $F$  in the place of $F_{y_0}$.
Recalling \eqref{eq:TestFunction-w}, we   write
$$
u_\e(y)=\e^{\frac{2-N}{2}} \eta(F^{-1} (y)) W_\e(y),
$$
where $W_\e (y) =w \left(\frac{F^{-1} (y)}{\e} \right)$.\\
\begin{lemma}\label{Lem1}
As $\e \to 0$, we have
\begin{align*}
\int_\O& |\n u_\e|^2 dy+\int_\O h(x) u_\e^2(x) dx=\int_{\R^N}|\n w|^2 dx+  \e^2\frac{|\kappa(y_0)|^2}{N-1} \int_{  \R^N} |z|^2 \left|{\de_t w} \right|^2 dx\\\\
&+\e^2\frac{|\kappa(y_0)|^2}{2(N-1)} \int_{  \R^N} |z|^2 |\n w|^2 dx+\e^2 h(y_0)\int_{\R^N} w^2 (x) dx+O\left(\e^{N-2}\right) \qquad \textrm{ for $N \geq 5$}.
\end{align*}
For $N=4$, there exists $C>0$, we have
\begin{align*}
\int_\O& |\n u_\e|^2 dy+\int_\O h(x) u_\e^2(x) dx\leq \int_{\R^N}|\n w|^2 dx+  C\e^2\left(\frac{3}{2}|\kappa(y_0)|^2+h(y_0)\right)|\ln(\e)|+O(\e^2).
\end{align*}
\end{lemma}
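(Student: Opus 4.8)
The plan is to pass to the Fermi-type coordinates of Section~\ref{Section2} and then feed the curvature expansion of Lemma~\ref{lem:to-prove} the rescaled ground state. Writing $F=F_{y_0}$ and changing variables $y=F(x)$, $x=(t,z)\in Q_{2r}$, in both integrals, the Dirichlet energy turns into a $g$-Dirichlet energy with weight $\sqrt{|g|}$: setting $w_\e(x):=w(x/\e)$ and $v_\e(x):=u_\e(F(x))=\e^{\frac{2-N}{2}}\eta(x)\,w_\e(x)$, where $w(x)=\th(|t|,|z|)$ is the symmetric profile of Theorem~\ref{TheoremA}, one has $\int_\O|\n u_\e|^2\,dy=\int_{Q_{2r}}|\n v_\e|^2_g\sqrt{|g|}\,dx$. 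First I would split off the cutoff: on $\{\eta\equiv1\}$ the factor $\eta$ disappears and the integrand is exactly $|\n w_\e|^2_g$, while on the annulus $\{0<\eta<1\}\subset Q_{2r}\setminus Q_r$ the terms carrying $\n\eta$ are controlled by Theorem~\ref{TheoremB}/Proposition~\ref{Prop-Decay-Esti2}. Indeed, for $|x|\ge r$ one has $w_\e(x)\le C\e^{N-2}|x|^{2-N}$ and $|\n w_\e(x)|\le C\e^{N-2}|x|^{1-N}$, so after multiplying by the prefactor $\e^{2-N}$ every cutoff contribution is $O(\e^{N-2})$.

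On the interior region Lemma~\ref{lem:to-prove}, whose hypothesis $v(y,z)=\bar\th(|y|,|z|)$ holds since $w_\e$ is even in $t$ and radial in $z$, yields
\begin{align*}
\int_{Q_{2r}}|\n w_\e|^2_g\sqrt{|g|}\,dx
&=\int|\n w_\e|^2\,dx
+\frac{|\k(y_0)|^2}{N-1}\int|z|^2|\de_t w_\e|^2\,dx\\
&\quad+\frac{|\k(y_0)|^2}{2(N-1)}\int|z|^2|\n w_\e|^2\,dx
+O\!\left(\int|x|^3|\n w_\e|^2\,dx\right),
\end{align*}
the odd-in-$z$ metric terms of Lemma~\ref{MaMetricMetric} having already cancelled by symmetry. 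Next I would rescale $x=\e X$: after multiplying by $\e^{2-N}$, the leading term becomes $\int_{\R^N}|\n w|^2\,dX$, the two curvature integrals become $\e^2\frac{|\k(y_0)|^2}{N-1}\int|z|^2|\de_t w|^2$ and $\e^2\frac{|\k(y_0)|^2}{2(N-1)}\int|z|^2|\n w|^2$, and the metric remainder scales like $\e^3\int_{|X|\le 2r/\e}|X|^3|\n w|^2\,dX$. For the potential, the same computation gives $\int_\O h\,u_\e^2=\e^2\int h(F(\e X))\,\eta(\e X)^2 w^2\sqrt{|g|}(\e X)\,dX$, and since $h$ is continuous with $h(F(\e X))\to h(y_0)$, dominated convergence (using $\int_{\R^N}w^2<\infty$ for $N\ge5$) produces $\e^2 h(y_0)\int_{\R^N}w^2+o(\e^2)$. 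For $N\ge5$ the decay estimates make all three correction integrals finite and every remainder above $o(\e^2)$, so collecting the terms gives the stated identity (with error written $O(\e^{N-2})$).

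The delicate case, and the main obstacle, is $N=4$, which is precisely the resonant dimension where the $\e^2$-scale and the cutoff scale $\e^{N-2}$ coincide and the correction integrals stop converging. Using the far-field behaviour $w\sim C|X|^{2-N}$, the rescaled integrals $\int_{|X|\le 2r/\e}w^2$ and $\int_{|X|\le 2r/\e}|z|^2|\n w|^2$ both diverge like $|\ln\e|$, and an angular integration over $S^{3}$ shows the logarithmic coefficient of the second is exactly three times that of the first. Since only an upper bound is needed, I would estimate $|\de_t w|\le|\n w|$ and merge the two curvature weights into $\frac{1}{N-1}+\frac{1}{2(N-1)}=\frac{3}{2(N-1)}=\frac12$, bounding the curvature part by $\frac12|\k(y_0)|^2\int|z|^2|\n w|^2$; factoring out the common logarithmic coefficient then converts $\frac12|\k(y_0)|^2\cdot 3+h(y_0)$ into $\frac32|\k(y_0)|^2+h(y_0)$, yielding
$$
\int_\O|\n u_\e|^2+\int_\O h u_\e^2\le\int_{\R^N}|\n w|^2+C\e^2\Big(\tfrac32|\k(y_0)|^2+h(y_0)\Big)|\ln\e|+O(\e^2).
$$
The hardest bookkeeping is exactly here: matching these logarithmic coefficients with the correct ratio requires the \emph{sharp} far-field asymptotics of $w$ and $\n w$ (the precise leading constant, not merely the two-sided bounds of Theorem~\ref{TheoremB})—in particular because $h(y_0)$ carries a sign, so $h(y_0)\int w^2$ must be controlled from the correct side—and one must simultaneously check that the cutoff, the $O(|x|^3)$ metric term, and the $h$-continuity error are all absorbed into $O(\e^2)$ without contaminating the $\e^2|\ln\e|$ coefficient.
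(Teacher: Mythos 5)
Your treatment of the case $N\geq 5$ is essentially the paper's argument: change variables via $F_{y_0}$, apply Lemma~\ref{lem:to-prove} to the rescaled symmetric profile, control the cutoff and the $O(|x|^3)$ metric remainder by the decay estimates of Proposition~\ref{Prop-Decay-Esti2}, and use continuity of $h$ for the potential term. That part is fine.

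The gap is in $N=4$, and it is exactly where you flagged it. Your route needs the two divergent integrals $\int_{\B_{r/\e}}|z|^2|\n w|^2\,dx$ and $\int_{\B_{r/\e}}w^2\,dx$ to have logarithmic coefficients in the precise ratio $3:1$, and you propose to extract this from the far-field asymptotics $w\sim C|x|^{2-N}$, $\n w\sim \n(C|x|^{2-N})$. But the paper only establishes the two-sided bound $c_1(1+|x|^{N-2})^{-1}\leq w\leq c_2(1+|x|^{N-2})^{-1}$ with $c_1<c_2$ and the one-sided gradient bound $|\n w|\leq C\max(1,|z|^{-s_1})|x|^{1-N}$ (which moreover degenerates near the axis $z=0$, so $\n w$ is not comparable to $\n(C|x|^{-2})$ uniformly); no sharp leading-order asymptotic of $w$ or $\n w$ is proved anywhere, and deriving one is a nontrivial additional result. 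Since $h(y_0)$ and $\tfrac32|\k(y_0)|^2$ enter with opposite signs in the relevant regime, an uncontrolled ratio of logarithmic coefficients destroys the sign of the $\e^2|\ln\e|$ term, so this cannot be waved away. The paper closes this gap differently: it multiplies the equation \eqref{BB} by $|z|^2\eta_\e w$ and integrates by parts, so that $\tfrac12\Delta(|z|^2)=N-1=3$ produces the identity
$$
\int_{Q_{r/\e}}|z|^2|\n w|^2\,dx=3\int_{Q_{r/\e}}w^2\,dx+O(1),
$$
the right-hand side of the equation contributing only $O(1)$ because $\int_{\R^4}|z|^{2-s_i}w^{2^*_{s_i}}\,dx<\infty$. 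This Pohozaev-type identity requires only the rough decay bounds you already have, and combined with $\int_{Q_{r/\e}}w^2\,dx\asymp|\ln\e|$ it yields the stated upper bound. You should replace the angular-averaging step by this integration-by-parts argument (or else actually prove the sharp asymptotics you are invoking).
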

\begin{proof}
We have 
$$
|\n u_\e |^2 =\e^{2-N} 
\left(
\eta^2 |\n W_\e|^2+\eta^2 |\n W_\e |^2+\frac{1}{2} \n W_\e^2 \cdot \n \eta^2
\right).
$$
Then integrating by parts, we get
\begin{align}\label{eq:expan-nabla-u-eps}
\displaystyle \int_\O |\n u_\e|^2 dy &\displaystyle=\e^{2-N} \int_{F\left({Q}_{2r}\right)} \eta^2 |\n W_\e|^2  dy+\e^{2-N} \int_{F\left({Q}_{2r}\right)\setminus F\left(\B_{r}\right)} W_\e^2 \left(|\n \eta|^2-\frac{1}{2} \D \eta^2 \right)  dy \nonumber\\
&\displaystyle=\e^{2-N} \int_{F\left({Q}_{2r}\right)} \eta^2 |\n W_\e|^2  dy-\e^{2-N} \int_{F\left({Q}_{2r}\right)\setminus F\left(\B_{r}\right)}W_\e^2 \eta \D \eta  dy  \nonumber\\
&\displaystyle=\e^{2-N} \int_{F\left({Q}_{2r}\right)} \eta^2 |\n W_\e|^2  dy+O\left(\e^{2-N} \int_{F\left({Q}_{2r}\right)\setminus F\left(\B_{r}\right)} W_\e^2  dy\right) .
\end{align}
By the  change of variable $y=\frac{F(x)}{\e}$ and \eqref{eq:TestFunction-th}, we can apply Lemma \ref{lem:to-prove}, to get
\begin{align*}
&\displaystyle \int_\O |\n u_\e|^2 dy\displaystyle= \int_{ {Q}_{r/\e} } |\n w|^2_{g_\e}\sqrt{|g_\e|} dx+O\left(\e^{2} \int_{ {Q}_{2r/\e} \setminus  \B_{r/\e} }w^2 dx+
 \int_{ {Q}_{2r/\e} \setminus  \B_{r/\e} }|\n w|^2 dx \right)\\
&= \int_{\R^N}|\n w|^2 dx+   \e^2\frac{|\kappa(y_0)|^2}{N-1} \int_{  \B_{r/\e}} |z|^2 \left|{\de_t w} \right|^2 dx+\e^2\frac{|\kappa(y_0)|^2}{2(N-1)} \int_{  \B_{r/\e}} |z|^2 |\n w|^2 dx\\
&+  O\left(\e^3 \int_{ \B_{r/\e}} |x|^3 |\n w|^2 dx +\e^2 \int_{\B_{2r/\e}\setminus \B_{r/\e}}  |w|^2 dx+  \int_{\R^N\setminus\B_{r/\e}}|\n w|^2 dx+  \e^2 \int_{\B_{2r/\e}\setminus \B_{r/\e}}|z|^2|\n w|^2 dx \right).
%
\end{align*}
By Proposition \ref{Prop-Decay-Esti2}, we have, for $N \geq 4$,   that
\begin{align*}
\e^3 \int_{ \B_{r/\e}} |x|^3 |\n w|^2 dx &+\e^2 \int_{\B_{2r/\e}\setminus \B_{r/\e}}  |w|^2 dx+  \int_{\R^N\setminus\B_{r/\e}}|\n w|^2 dx\\\\
&+  \e^2 \int_{\B_{2r/\e}\setminus \B_{r/\e}}|z|^2|\n w|^2 dx=O(\e^{N-2})
\end{align*}
and 
$$
\int_{\R^N \setminus \B_{r/\e}}w^2 dx+\int_{\R^N \setminus  \B_{r/\e}} |z|^2 \left|{\de_t w} \right|^2 dx+\int_{ \R^N \setminus \B_{r/\e}} |z|^2 |\n w|^2 dx=O(\e^{N-4}) \qquad \forall N \geq 5.
$$
Therefore if $N \geq 5$, we have
\begin{align}\label{Step1}
\displaystyle\int_\O |\n u_\e|^2 dy \displaystyle
=\int_{\R^N}|\n w|^2 dx&+  \e^2\frac{|\kappa(y_0)|^2}{N-1} \int_{  \R^N} |z|^2 \left|{\de_t w} \right|^2 dx\nonumber\\\
&+\e^2\frac{|\kappa(y_0)|^2}{2(N-1)} \int_{  \R^N} |z|^2 |\n w|^2 dx+ O\left(\e^{N-2}  \right).
\end{align}
For $N=4,$ we have
\begin{align}\label{Hot1}
\displaystyle\int_\O |\n u_\e|^2 dy \displaystyle
&\leq\int_{\R^N}|\n w|^2 dx+  \e^2\frac{|\kappa(y_0)|^2}{2} \int_{  \B_{r/\e}} |z|^2 \left|\n w \right|^2 dx+O\left(\e^{2}  \right).
\end{align}
Next, by the change of variable formula $y=\frac{F(x)}{\e}$,\eqref{eq:TestFunction-th} and the continuity of the function $h$, we have
$$
\int_\O h(x) u_\e^2(x) dx= \e^2 h(y_0) \int_{Q_{r/\e}} w^2 (x) dx+ \e^2 \int_{Q_{2r/\e}\setminus Q_{r/\e}} w^2 (x) dx.
$$
Using again Proposition \ref{Prop-Decay-Esti1}, we get
$$
\int_{Q_{2r/\e}\setminus Q_{r/\e}} w^2 (x) dx=O\left(\e^{N-2}\right).
$$
Moreover for $N \geq 5$, we have
$$
\int_{\R^N\setminus Q_{r/\e}} w^2 (x) dx=O\left(\e^{N-2}\right)
$$
Therefore
\begin{equation}\label{Step21}
\int_\O u_\e^2(x) dx=\e^2 h(y_0)\int_{\R^N} w^2 (x) dx+o\left(\e^2\right).
\end{equation}
If $N=4$, we have
\begin{equation}\label{Step210}
\int_\O u_\e^2(x) dx=\e^2 h(y_0)\int_{Q_{r/\e}} w^2 (x) dx+O\left(\e^2\right).
\end{equation}
Next, we assume that $N=4$ and we let $\eta_\e(x)=\eta(\e x)$. 
We multiply \eqref{BB} by $|z|^2\eta_\e w$ and integrate by parts to get
\begin{align*}
\displaystyle & \l \int_{Q_{2r/\e}}    \eta_\e  |z|^{2-s_1}  w^{2^*_{s_1}} dx+\int_{Q_{2r/\e}}    \eta_\e  |z|^{2-s_2}  w^{2^*_{s_2}} dx\displaystyle=\int_{Q_{2r/\e}} \n w \cdot \n \left(\eta_\e  |z|^2 w\right) dx\\
&\displaystyle= \int_{Q_{2r/\e}} \eta_\e  |z|^2 |\n w|^2 dx+\frac{1}{2} \int_{Q_{2r/\e}} \n w^2 \cdot \n \left(|z|^2\eta_\e \right) dx \int_{Q_{2r/\e}} \eta_\e  |z|^2 |\n w|^2 dx-\frac{1}{2} \int_{Q_{2r/\e}} w^2 \D\left(|z|^2\eta_\e \right) dx\\
&\displaystyle=\int_{Q_{2r/\e}} \eta_\e  |z|^2 |\n w|^2 dx-3\int_{Q_{2r/\e}} w^2 \eta_\e      dx =\displaystyle\quad-\frac{1}{2} \int_{Q_{2r/\e}\setminus Q_{r/\e}} w^2 (|z|^2\D\eta_\e+ 4\n \eta_\e\cdot z) dx.
\end{align*} 
We then deduce that 
\begin{align*}
\l \int_{Q_{2r/\e}} |z|^{2-s_1}  w^{2^*_{s_1}} dx&+\int_{Q_{2r/\e}} |z|^{2-s_2}  w^{2^*_{s_2}} dx= \int_{Q_{r/\e}}  |z|^2 |\n w|^2 dx-(N-1)\int_{Q_{r/\e}} w^2        dx\\
&+O\left( \int_{Q_{2r/\e}\setminus Q_{r/\e}}      |z|^{2-\s}  w^{2^*_\s} dx+  \int_{Q_{2r/\e}\setminus  Q_{r/\e}}  |z|^2 |\n w|^2 dx+ \int_{Q_{2r/\e}\setminus  Q_{r/\e}}     w^2 dx     \right)\\
&+ O\left(  \e \int_{Q_{2r/\e}\setminus  Q_{r/\e}}  |z|  |\n w| dx+ \e^2 \int_{Q_{2r/\e}\setminus  Q_{r/\e}}   |z|^2   w^2 dx     \right).
\end{align*} 
By Proposition \ref{Prop-Decay-Esti1}, we have
$$
\l \int_{Q_{2r/\e}} |z|^{2-s_1}  w^{2^*_{s_1}} dx+\int_{Q_{2r/\e}} |z|^{2-s_2}  w^{2^*_{s_2}} dx=O(1)
$$
and 
\begin{align*}
+\int_{Q_{2r/\e}\setminus Q_{r/\e}}      |z|^{2-\s}  w^{2^*_\s} dx&+  \int_{Q_{2r/\e}\setminus  Q_{r/\e}}  |z|^2 |\n w|^2 dx+ \int_{Q_{2r/\e}\setminus  Q_{r/\e}}     w^2 dx \\
&+\e \int_{Q_{2r/\e}\setminus  Q_{r/\e}}  |z|  |\n w| dx+ \e^2 \int_{Q_{2r/\e}\setminus  Q_{r/\e}}   |z|^2   w^2 dx=O(\e^2).
\end{align*}
Therefore
 \begin{align}\label{Hot22}
 \int_{Q_{r/\e}}  |z|^2 |\n w|^2 dx=3\int_{Q_{r/\e}} w^2dx +O(1).
\end{align} 
To finish, we use Proposition \ref{Prop-Decay-Esti1} to get
\begin{align}\label{Hot2222}
\int_{Q_{r/\e}} w^2dx  \leq C  \int_{Q_{r/\e}}\frac{dx}{1+|x|^4}=C|S^3| \int_0^{r\e} \frac{t^3 dt}{1+t^4} \leq C(1+|\ln(\e)|),
\end{align}
where $C$ is a positive constant that may change from an inequality to another.
Thus the result follows  immediately from  \eqref{Step1}, \eqref{Hot1}, \eqref{Step21}, \eqref{Step210}  \eqref{Hot22} and  \eqref{Hot2222}. This then ends the proof.
\end{proof}

\begin{lemma}\label{Lem2}
Let $s \in (0, 2)$. Then we have
$$
\int_\O \rho^{-s}_\G |u_\e|^{2^*_s} dx =\int_{\R^N} |z|^{-s} w^{2^*_s} dx+
\e^2\frac{|\kappa(y_0)|^2}{2(N-1)} \int_{\R^N} |z|^{2-s} w^{2^*_s} dx+O\left(\e^{N-s}\right).
$$
\end{lemma}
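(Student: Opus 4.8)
The plan is to localize the integral to the tubular neighborhood of $\G$, pass to the Fermi‑type coordinates $F=F_{y_0}$ of Section \ref{Section2}, rescale by $\e$, and then extract the expansion from the volume element together with the symmetry and decay of $w$. First I would use that $u_\e$ is supported in $F\left(Q_{2r}\right)$ and perform the change of variables $y=F(x)$. By \eqref{eq:rho_Gamm-is-mod-z} we have $\rho_\G(F(x))=|z|$, while $dy=\sqrt{|g|}(x)\,dx$, so that, recalling \eqref{eq:TestFunction-th} and $\tfrac{2-N}{2}\,2^*_s=-(N-s)$,
$$
\int_\O \rho_\G^{-s}|u_\e|^{2^*_s}\,dy=\e^{-(N-s)}\int_{Q_{2r}}|z|^{-s}\,\eta^{2^*_s}(x)\,\th^{2^*_s}\!\left(|t|/\e,|z|/\e\right)\sqrt{|g|}(x)\,dx.
$$
Rescaling $x=\e X$ with $X=(T,Z)$ and writing $w^{2^*_s}(X)=\th^{2^*_s}(|T|,|Z|)$, the prefactors cancel (indeed $-(N-s)-s+N=0$) and I obtain the scale–invariant form
$$
\int_\O \rho_\G^{-s}|u_\e|^{2^*_s}\,dy=\int_{Q_{2r/\e}}|Z|^{-s}\,\eta^{2^*_s}(\e X)\,w^{2^*_s}(X)\,\sqrt{|g|}(\e X)\,dX.
$$

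Next I would insert the expansion \eqref{DeterminantMetric} of $\sqrt{|g|}(\e X)$ from Lemma \ref{MaMetricMetric}, i.e. $\sqrt{|g|}(\e X)=1+\e\sum_i Z_i\k_i(0)+\e^2 T\sum_i Z_i\k_i'(0)+\tfrac{\e^2}{2}\sum_{ij}Z_iZ_j\k_i(0)\k_j(0)+O(\e^3|X|^3)$. The crucial structural point is that, by Proposition \ref{Prop-Decay-Esti1} and the definition \eqref{eq:TestFunction-th}, the weight $|Z|^{-s}w^{2^*_s}(X)$ is invariant under $T\mapsto-T$ and under rotations in the $Z$ variable; in particular it is even in each coordinate $T,Z_2,\dots,Z_N$. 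Hence every monomial of odd degree in some coordinate integrates to zero: the linear term $\e\sum_iZ_i\k_i(0)$, the mixed term $\e^2 T\sum_iZ_i\k_i'(0)$, and all degree‑three contributions of the remainder all vanish. The first surviving correction is the diagonal part of the quadratic term, and by the rotational symmetry of $w$ in $Z$ one has $\int Z_i^2|Z|^{-s}w^{2^*_s}\,dX=\tfrac1{N-1}\int|Z|^2|Z|^{-s}w^{2^*_s}\,dX$, so that, since $\sum_{i=2}^N\k_i(0)^2=|\k(y_0)|^2$, the quadratic term contributes exactly $\e^2\tfrac{|\k(y_0)|^2}{2(N-1)}\int_{\R^N}|Z|^{2-s}w^{2^*_s}\,dX$, the claimed second term.

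Finally I would replace the truncated domain $Q_{2r/\e}$ by $\R^N$ and the cutoff $\eta^{2^*_s}(\e X)$ by $1$ (they differ only on $Q_{2r/\e}\setminus Q_{r/\e}$, where $\eta\equiv1$ fails), and control the resulting errors with the decay estimates. From Proposition \ref{Prop-Decay-Esti1} we have $w(x)\le c_2(1+|x|)^{2-N}$, hence $w^{2^*_s}(x)\le C(1+|x|)^{-2(N-s)}$; since the angular integral of $|z|^{-s}$ over $S^{N-1}$ is finite (because $s<2\le N-1$), a direct computation gives $\int_{\R^N\setminus Q_{r/\e}}|z|^{-s}w^{2^*_s}\,dx=O(\e^{N-s})$, and the same bound governs both the cut‑off region and, after the odd terms have been discarded, the contribution of the higher‑order metric remainder. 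Collecting the leading term, the curvature term and these errors yields the stated identity. I expect the main obstacle to be precisely this last bookkeeping: one must verify that the symmetry of $w$ annihilates the $O(\e)$ term so that the expansion genuinely starts at order $\e^2$, and then check that each remaining contribution — the tails and the metric remainder — is of higher order than $\e^2$ and fits the bound $O(\e^{N-s})$, which is exactly where the sharp decay of $w$ and the admissible ranges $s<2$, $N\ge4$ enter.
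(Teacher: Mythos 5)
Your proposal is correct and follows essentially the same route as the paper: change of variables via $F_{y_0}$ using \eqref{eq:rho_Gamm-is-mod-z}, rescaling by $\e$, insertion of the expansion \eqref{DeterminantMetric} of $\sqrt{|g|}$ with the odd terms annihilated by the symmetry of $w$ and the quadratic term reduced by $\int z_iz_j(\cdot)=\tfrac{\d_{ij}}{N-1}\int|z|^2(\cdot)$, and tail/cutoff errors of size $O(\e^{N-s})$ controlled by the decay estimate of Proposition \ref{Prop-Decay-Esti1}. You merely make explicit the symmetry cancellation that the paper leaves implicit; no substantive difference.
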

\begin{proof}
Let $s\in [0, 2)$. Then by the change of variable $y=\frac{F(x)}{\e}$, \eqref{eq:rho_Gamm-is-mod-z} and   \eqref{DeterminantMetric},  we get 
\begin{align*}
&\displaystyle\int_\O \rho^{-s}_\G |u_\e|^{2^*_s} dy=\int_{\B_{r/\e}} |z|^{-s} w^{2^*_s} \sqrt{|g_\e|}dx+ O\left( \int_{\B_{2r/\e}\setminus \B_{r/\e}}  |z|^{-s} (\eta(\e x) w)^{2^*_s} dx \right)\\
%
%
&\displaystyle= \int_{\B_{r/\e}} |z|^{-s} w^{2^*_s} dx+\e^2\frac{|\kappa(y_0)|^2}{2(N-1)} \int_{\B_{r/\e}} |z|^{2-s} w^{2^*_s} dx\\\
&+O\left(\e^3 \int_{\B_{r/\e}} |x|^3 |z|^{-s} w^{2^*_s} dx+   \int_{\B_{2r/\e}\setminus \B_{r/\e}}  |z|^{-s}  w^{2^*_s} dx \right)\\
&\displaystyle= \int_{\R^N} |z|^{-s} w^{2^*_s} dx+\e^2\frac{|\kappa(y_0)|^2}{2(N-1)} \int_{\B_{r/\e}} |z|^{2-s} w^{2^*_s} dx\\
&\quad +O\left(\e^3 \int_{\B_{r/\e}} |x|^3 |z|^{-s} w^{2^*_s} dx + \int_{\R^N \setminus \B_{r/\e}} |z|^{-s} w^{2^*_s} dx+ \int_{\B_{2r/\e}\setminus \B_{r/\e}}  |z|^{-s}  w^{2^*_s} dx\right).
\end{align*}
By Proposition \ref{Prop-Decay-Esti1}, we have
$$
\e^3 \int_{\B_{r/\e}} |x|^3 |z|^{-s} w^{2^*_s} dx+\int_{\R^N \setminus \B_{r/\e}} |z|^{-s} w^{2^*_s} dx+\int_{\B_{2r/\e}\setminus \B_{r/\e}}  |z|^{-s}  w^{2^*_s} dx=O\left(\e^{N-s}\right)
$$
and
\begin{equation}\label{Step3A}
\int_{\R^N \setminus \B_{r/\e}} |z|^{2-s} w^{2^*_{s}} dx=O\left(\e^{N-2-s}\right) \qquad \forall N \geq 4.
\end{equation}
Therefore
\begin{equation}\label{Step3}
\int_\O \rho^{-s}_\G |u_\e|^{2^*_s} dx =\int_{\R^N} |z|^{-s} w^{2^*_s} dx+
\e^2\frac{|\kappa(y_0)|^2}{2(N-1)} \int_{\R^N} |z|^{2-s} w^{2^*_s} dx+O\left(\e^{N-s}\right),
\end{equation}
as $\e \to 0$.  This then ends the proof.
\end{proof}
Now we are in position to prove Proposition \ref{Exit1}. 
\begin{proof} \textbf{of Proposition \ref{Exit1}}\\
Recall that, for all $t \geq 0$ and all $u \in H^1_0(\O)$, we have
$$
\Psi(tu):=\frac{t^2}{2} \int_\O |\n u|^2 dx+\frac{1}{2} \int_\O h(x) u^2 dx-t^{2^*_{s_1}}\frac{\l}{2^*_{s_1}} \int_\O \frac{|u|^{2^*_{s_1}}}{\rho_\G^{s_1}(x)} dx-t^{2^*_{s_2}}\frac{1}{2^*_{s_2}} \int_\O \frac{|u|^{2^*_{s_2}}}{\rho_\G^{s_2}(x)} dx.
$$
Then by Lemma \ref{Lem1} and Lemma \ref{Lem2}, we have
\begin{align*} 
J\left(t u_\e\right)=\Psi(t w)&+\e^2t^2 \frac{|\kappa(y_0)|^2}{2(N-1)} \left(\int_{  \B_{r/\e}} |z|^2 \left|{\de_t w} \right|^2 dx+\int_{  \B_{r/\e}} |z|^2 |\n w|^2 dx\right)\\\
&+\e^2 t^2 h(y_0)\int_{\B_{r/\e}}w^2 dx+\e^2 \l \frac{t^{2^*_{s_1}}}{2^*_{s_1}} \frac{|\kappa(y_0)|^2}{2(N-1)}\int_{\B_{r/\e}} |z|^{2-s_1} w^{2^*_{s_1}} dx\\\\
&+\e^2 \frac{t^{2^*_{s_2}}}{2^*_{s_2}} \frac{|\kappa(y_0)|^2}{2(N-1)}\int_{\B_{r/\e}} |z|^{2-s_2} w^{2^*_{s_2}} dx+O\left(\e^{N-2}\right) \qquad \textrm{ for $N\geq 5$}.
\end{align*}
For $N=4$, there exists $C>0$, we have
\begin{align*}
J(t u_\e)\leq \Psi(t w)+  C\e^2 t^2 \left(\frac{3}{2}|\kappa(y_0)|^2+h(y_0)\right)|\ln(\e)|+O(\e^2).
\end{align*}
Since $2^*_{s_2}> 2^*_{s_1}$, $J(tu_\e)$ has a unique maximum, we have
$$
\max_{t\geq 0} \Psi(tw)=\Psi(w)=\b^*.
$$
Therefore, the maximum of $J(t u_\e)$ occurs at $t_\e:=1+o_\e(1)$. Next setting
\begin{align*}
\mathcal{G}(t w):&=\displaystyle\e^2t^2 \frac{|\kappa(y_0)|^2}{2(N-1)} \left(\int_{\R^N} |z|^2 \left|{\de_t w} \right|^2 dx+\int_{\R^N} |z|^2 |\n w|^2 dx\right)\\\\
&+\e^2 t^2 h(y_0)\int_{\R^N}w^2 dx+\e^2 \displaystyle \l \frac{t^{2^*_{s_1}}}{2^*_{s_1}} \frac{|\kappa(y_0)|^2}{2(N-1)}\int_{\R^N} |z|^{2-s_1} w^{2^*_{s_1}} dx\\\\
&+\e^2 \frac{t^{2^*_{s_2}}}{2^*_{s_2}} \frac{|\kappa(y_0)|^2}{2(N-1)}\int_{\R^N} |z|^{2-s_2} w^{2^*_{s_2}} dx+o(\e^2) \quad\textrm{ for $N \geq 5$},
\end{align*}
and
$$
\mathcal{G}(t w)=C\e^2 |\ln(\e)| t^2 \left(\frac{3}{2}|\kappa(y_0)|^2+h(y_0)\right) +O(\e^2) \qquad \textrm{ for $N=4$}.
$$
Thanks to assumption \eqref{ExistenceAssumption}, we have
$$
\mathcal{G}(w)<0.
$$
Therefore
$$
\max_{t \geq 0} J(t u_\e):= J(t_\e u_\e)\leq \Psi(t_\e w)+\e^2 \mathcal{G}(t_\e w) \leq \Psi(t_\e w) < \Psi(w)=\b^*
$$

We thus get the desired result.
\end{proof}
\begin{proof}\textbf{of Theorem \ref{th:main1}}
The proof of Theorem \ref{th:main1} is a direct consequence of  Proposition \ref{th:main12} and Proposition \ref{Exit1}.
\end{proof}
\vspace{0.5cm}
%
%
%
\textbf{Declarations}\\\

\textbf{Conflict of interest statement:} The corresponding author states that there is no conflict of interest.\\\

\textbf{Code availability statement:} not Applicable.\\\

\textbf{Data availability statement:} Data sharing not applicable to this article as no datasets were generated or analysed during the current study.

\end{document}